\documentclass[11pt]{amsart}

\usepackage[T1]{fontenc}
\usepackage[utf8]{inputenc}
\usepackage{lmodern}
\usepackage{amsmath,amssymb,amsfonts,amsthm,mathtools}
\usepackage{enumerate}
\usepackage{microtype}
\usepackage{cite}
\usepackage[hidelinks]{hyperref}

\newtheorem{theorem}{Theorem}[section]
\newtheorem{proposition}[theorem]{Proposition}
\newtheorem{lemma}[theorem]{Lemma}
\newtheorem{corollary}[theorem]{Corollary}

\theoremstyle{definition}

\theoremstyle{remark}
\newtheorem{remark}[theorem]{Remark}

\newcommand{\F}{\mathbb F}
\newcommand{\K}{\mathbb K}
\newcommand{\PP}{\mathbb P}
\newcommand{\W}{\mathcal W}
\newcommand{\X}{\mathcal X}

\DeclareMathOperator{\Aut}{Aut}
\DeclareMathOperator{\Tr}{Tr}
\DeclareMathOperator{\gon}{gon}

\title[Maximal curves with trivial automorphism group]
      {Maximal curves with trivial automorphism group}

\author[D. Bartoli]{Daniele Bartoli}
\address{Dipartimento di Matematica e Informatica,
Universit\`a degli Studi di Perugia, Perugia, Italy}
\email{daniele.bartoli@unipg.it}

\author[H. Borges]{Herivelto Borges}
\address{Instituto de Matem\'atica e Estat\'istica,
Universidade de S\~ao Paulo, S\~ao Paulo, Brazil}
\email{hborges@ime.usp.br}

\author[M. Timpanella]{Marco Timpanella}
\address{Dipartimento di Matematica e Informatica,
Universit\`a degli Studi di Perugia, Perugia, Italy}
\email{marco.timpanella@unipg.it}

\author[G. Zini]{Giovanni Zini}
\address{Dipartimento di Scienze Fisiche, Informatiche e Matematiche,
Universit\`a degli Studi di Modena e Reggio Emilia, Modena, Italy}
\email{giovanni.zini@unimore.it}

\date{}

\subjclass[2020]{Primary 14H37, 14G15; Secondary 11G20, 11T06}
\keywords{Maximal curves, automorphism groups, Hermitian curve,
Galois coverings, additive curves}

\begin{document}

\begin{abstract}
We construct algebraic curves with trivial geometric automorphism group by
taking cyclic scalar quotients of curves of the form
\[
 y^r-y=xR(x),\qquad r=p^e.
\]
We give a condition on the genus
under which the quotient has no nontrivial automorphism.  The proof uses the
ramification of a degree-$r$ morphism to the projective line,
Castelnuovo--Severi, and explicit automorphisms of the covering curve.  The
quotient also has gonality $r$, with a unique morphism of degree $r$ to the
projective line up to an automorphism of the latter.  Applying the construction
to maximal additive curves gives explicit maximal curves with trivial
geometric automorphism group in every positive characteristic, including
families defined over the prime field and of unbounded genus. To the best
of our knowledge, these are the first examples of maximal curves with
trivial geometric automorphism group. Every maximal curve in these families
is covered by the Hermitian curve. For every odd prime $p$ and every
$n\ge2$, we prove that the curve $\X_{p,n}$, maximal over
$\F_{p^{4n+2}}$, is not a Galois subcover of the corresponding
Hermitian curve, even over the algebraic closure. Thus every odd
characteristic yields an infinite family of curves covered but not
Galois covered by the Hermitian curve. To the best of our knowledge,
these are the first such infinite families.
\end{abstract}

\maketitle

\section{Introduction}

Let $\K=\overline{\F}_p$.  For a power $q$ of $p$, a curve $\mathcal C$
defined over $\F_{q^2}$ is called $\F_{q^2}$-maximal if
\[
 \#\mathcal C(\F_{q^2})=q^2+1+2qg(\mathcal C).
\]
Unless otherwise stated, curves are nonsingular, projective and absolutely
irreducible.  We consider their automorphism groups over $\K$.

The Hermitian curve over $\F_{q^2}$ is the basic example of a maximal curve:
it has genus $q(q-1)/2$, the largest possible genus of an
$\F_{q^2}$-maximal curve.  Its quotients have provided many further examples
\cite{HKT}.  More generally, a quotient defined over $\F_{q^2}$ of an
$\F_{q^2}$-maximal curve is again maximal \cite[Proposition~6]{Lachaud}.
This method also applies to maximal curves outside the class of Hermitian
subcovers; the curve of Giulietti and Korchm\'aros is a well-known example
which is not covered by the Hermitian curve over its field of definition
\cite{GK}.

Curves with trivial geometric automorphism group also exist over very small
finite fields: Poonen proved existence over every field in every genus
$g\ge3$ \cite{Poonen}.  Moreover, Achter, Glass and Pries showed that over
an algebraically closed field of characteristic $p$, every genus $g\ge3$
and every $p$-rank between $0$ and $g$ occur for such a curve \cite{AGP}.
Neither result prescribes the number of rational points over a finite field.
The problem considered here is to give equations for curves which are
maximal over a specified finite field and have no automorphisms even over
$\K$.  To the best of our knowledge, the curves constructed below are the
first examples in the literature combining maximality with a trivial
geometric automorphism group.  Passing from a maximal curve to a quotient
ensures maximality, but
determining the full automorphism group of the quotient requires a separate
argument: its automorphisms need not lift to the covering curve.  We obtain
that argument from a distinguished rational subfield of small index.

Our starting point is the curve
\begin{equation}\label{eq:intro-cover}
 \W_R:\qquad y^r-y=xR(x),\qquad r=p^e,
\end{equation}
where $R$ is an $r$-linearized polynomial.  Curves of this form were studied
by van der Geer and van der Vlugt in characteristic two \cite{vdGvdV}, and by
Bouw, Ho, Malmskog, Scheidler, Srinivasan and Vincent in odd characteristic
\cite{BouwEtAl}.  The prime-power additive degree is also considered in
\cite[Remark~13.8]{vdGvdV}.  Besides translations in $y$, these curves admit
scalar automorphisms for suitable $R$.

We first study quotient curves without imposing any condition on the number of
rational points.  We then choose maximal curves of the form
\eqref{eq:intro-cover}; maximality passes to their quotients defined over the
same field.

More precisely, write
\begin{equation}\label{eq:intro-R}
 R(X)=\sum_{j=0}^{h}B_jX^{r^j},\qquad B_0B_h\ne0,
\end{equation}
and define
\begin{equation}\label{eq:intro-D}
 D=
 \begin{cases}
 2(r-1),& r\text{ odd and }B_j=0\text{ for every odd }j,\\
 r-1,&\text{otherwise}.
 \end{cases}
\end{equation}
The group
\[
 C_D=\{(x,y)\mapsto(ax,a^2y):a^D=1\}
\]
acts on $\W_R$.  The curve of interest is the quotient
\[
 \X_R=\W_R/C_D.
\]
Put $\epsilon=\gcd(2,D)$.  The quotient morphism is described explicitly by
$u=x^D$ and $z=y/x^2$.  In particular, $u$ defines a morphism of degree
$r$ from $\X_R$ to the projective line, and
\begin{equation}\label{eq:intro-genus}
 g(\X_R)=\frac{(r-1)(r^h+1-\epsilon)}{2D}.
\end{equation}
Our main geometric result is the following.

\begin{theorem}\label{thm:intro-rigidity}
Assume that $r=p^e>2$, $D>\epsilon$, and
\[
 g(\X_R)>(r-1)^2.
\]
Then
\[
 \Aut_{\K}(\X_R)=1.
\]
Moreover,
\[
 \gon(\X_R)=r,
\]
and $u:\X_R\to\PP^1$ is the unique morphism of degree $r$ to the projective
line, up to an automorphism of $\PP^1$.
\end{theorem}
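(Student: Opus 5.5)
The proof has three stages. First, $u$ is the only degree-$r$ map to $\PP^1$ up to $\Aut(\PP^1)$. Second, every automorphism of $\X_R$ therefore normalizes the subfield $\K(u)$. Third, one shows directly that such an automorphism must be trivial, using the ramification of $u$ and the explicit automorphisms of $\W_R$.

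\emph{Gonality and uniqueness.} Suppose $v:\X_R\to\PP^1$ has degree $d\le r$, and assume $\K(v)\neq\K(u)$. If $\K(u,v)=\K(\X_R)$, the Castelnuovo--Severi inequality gives
\[
g(\X_R)\le r\cdot 0+d\cdot 0+(r-1)(d-1)\le (r-1)^2,
\]
which contradicts the hypothesis. Otherwise $\K(u,v)$ is a proper intermediate field of $\K(\X_R)/\K(u)$ strictly containing $\K(u)$.

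To rule this case out, I would use the structure of $u$. The extension $\K(\X_R)/\K(u)$ is given by $z^r x^{2r}-zx^2=xR(x)$. Dividing by $x^2$ and writing $x^{r^j-1}$ as a power of $u$, which is possible because $D\mid r-1$ (or $D\mid r^j-1$ for even $j$ in the case $D=2(r-1)$), turns it into an equation
\[
u^{2(r-1)/D} z^r - z = \text{(polynomial in } u\text{)}.
\]
After substituting $z=w\cdot(\text{a suitable power of }u)$, this should become an Artin--Schreier type extension of degree $r$ whose Galois closure has an elementary abelian part. I would check that $\K(\X_R)/\K(u)$ is primitive, i.e.\ has no intermediate fields. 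One way is through the ramification at $u=\infty$, which is total and wild, together with the fact that any intermediate field would give a smaller genus contradicting Riemann--Hurwitz bounds.

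Once there are no intermediate fields, $\K(v)\subseteq\K(u)$ forces $\K(v)=\K(u)$ because $d\le r$. In the remaining case $\K(u,v)=\K(\X_R)$ and Castelnuovo--Severi applies as above. Hence every map of degree $\le r$ is $u$ composed with a Möbius transformation, so $\gon(\X_R)=r$ (the inequality $\gon\ge r$ follows from the same argument).

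\emph{Automorphisms.} For $\sigma\in\Aut_{\K}(\X_R)$, uniqueness gives $\sigma^*\K(u)=\K(u)$, so $\sigma$ induces a Möbius map $\mu$ on the $u$-line that preserves the branch locus of $u$. Riemann--Hurwitz for $u$ shows the branching is concentrated: total wild ramification over $u=\infty$, plus tame behaviour over $u=0$ coming from the $C_D$-quotient, using $D>\epsilon$. So $\mu$ fixes $\infty$ and $0$, hence $\mu(u)=\lambda u$. Then $\sigma$ preserves the pole divisor of $u$ and $\K(u)$, and therefore lifts to an automorphism of $\W_R=\X_R\times_{\PP^1_u}\PP^1_x$, the normalization of the fibre product with $x^D=u$, whose fibre product curve is $\W_R$. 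Consequently $\sigma$ lies in $N(C_D)/C_D$ inside $\Aut(\W_R)$.

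The automorphisms of $\W_R$ fixing the totally ramified point at infinity are $x\mapsto ax+b$, $y\mapsto a^{?}y+\ell(x)+c$, and one computes them explicitly. Normalizing $C_D$, i.e.\ commuting with the scalars modulo $C_D$, forces $b=0$ and $\ell=0$. The condition $c=0$ comes from compatibility with $z=y/x^2$, and $a$ is forced to satisfy $a^D=1$ by the defining equation together with the maximality of $D$ in its definition. This last point uses $B_0B_h\neq0$ and the parity condition on the indices $j$. Such automorphisms act trivially on $\X_R$, so $\sigma=1$.

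The main obstacle is the absence of intermediate fields, or an alternative way to exclude proper subfields of $\K(\X_R)/\K(u)$. I would first try a genus count: any intermediate field would be rational or of small genus, and Castelnuovo--Severi applied to that subfield and $\K(v)$ would again bound $g(\X_R)$ by about $(r-1)^2$.
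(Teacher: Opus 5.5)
Your architecture matches the paper's: Castelnuovo--Severi to make $\sigma$ preserve $\K(u)$, distinct ramification over $0$ and $\infty$ to get $\sigma(u)=\lambda u$, a lift through $x^D=u$, and then pinning down the lift. However, the step you flag as the main obstacle is a genuine gap, and neither route you suggest can close it.

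Everything depends on $\K(\X_R)/\K(u)$ having no intermediate fields. Without this you get neither uniqueness of $u$ nor $\sigma^*\K(u)=\K(u)$, since $\K(u,\sigma(u))$ could be a proper intermediate field.

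\begin{itemize}
\item Total wild ramification over $u=\infty$ does not give primitivity. For example, $y^{p^2}-y=f(x)$ with $\deg f$ prime to $p$ is totally ramified over the pole of $f$, yet it has $p+1$ intermediate fields over $\K(x)$.
\item No genus or Riemann--Hurwitz count can detect primitivity either. Take $r=9$. The group $C_4=\{(x,y)\mapsto(ax,a^2y):a^4=1\}$ acts on $\W_R$ for every $R$, because $9^j+1\equiv2\pmod 4$. The map $x^4:\W_R/C_4\to\PP^1$ has degree $9$. It is totally and wildly ramified over $\infty$, has tame fibre type $(1,2,2,2,2)$ over $0$, and has no other branch points. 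Its genus is $g(\W_R/C_4)=9^h-1>64=(r-1)^2$ for $h\ge2$. Nevertheless it factors through four intermediate curves, one for each $\F_3$-line $W\subset\F_9$ (the fixed field of $W\rtimes C_4$).
\end{itemize}

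So the data you propose to use cannot distinguish the primitive case from the imprimitive one.

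The missing idea is group-theoretic.
\begin{itemize}
\item The composite $\W_R\to\X_R\to\PP^1_u$ is Galois with group $G=V\rtimes C_D$, where $V=\{y\mapsto y+b:b\in\F_r\}$.
\item $C_D$ acts on $V\cong(\F_r,+)$ by multiplication by $H=\{a^2:a^D=1\}$.
\item An intermediate field corresponds to a group $C_D\le J\le G$, and $J=(J\cap V)C_D$ with $J\cap V$ an $H$-stable additive subgroup of $\F_r$.
\item One then checks that $H$ generates $\F_r$ over $\F_p$. If $r$ is even or $D=2(r-1)$, then $H=\F_r^*$. Otherwise $H$ is the group of squares, of order $(r-1)/2>\sqrt r-1$, so it lies in no proper subfield.
\item Hence $J\cap V$ is an $\F_r$-subspace of $\F_r$, i.e.\ $0$ or $V$.
\end{itemize}
This is exactly where the specific choice of $D$ enters; in the $C_4$ example, $H=\{\pm1\}\subset\F_3$.

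A smaller point in your last step: commuting with $C_D$ does not force the polynomial part of $\widetilde\sigma(y)=dy+B(x)$ to vanish. It only gives $B(a_0x)=a_0^2B(x)$, which still allows monomials $x^i$ with $i\equiv2\pmod D$. The nonconstant part is killed by the defining equation $(ax)R(ax)-dxR(x)=B^r-B$. Its left side involves only the exponents $2$ and $r^j+1$, none of which is divisible by $r$. Only after that does commutation with some $a_0$ satisfying $a_0^D=1$ and $a_0^2\ne1$ (which exists because $D>\epsilon$) kill the constant.
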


The arithmetic part uses the point count for curves of the form
\eqref{eq:intro-cover}, expressed through a quadratic form
\cite{vdGvdV,BouwEtAl,CakcakOzbudak}.  Let $r=p^e>3$, let $m\ge4$, put $q=r^m$, and
choose $\alpha,t\in\F_{q^2}$ with
\[
 \alpha\ne0,\qquad t^q=-t,\qquad t^r+t\ne0.
\]
Set
\begin{equation}\label{eq:intro-Rat}
 R_{\alpha,t}(X)=\alpha^2tX+
 \sum_{j=1}^{m-1}\alpha^{r^j+1}(t^{r^j}+t)X^{r^j}.
\end{equation}
For an $r$-linearized polynomial $R$, define $\Psi_R$ by
\begin{equation}\label{eq:intro-Psi}
 R(X)=X\Psi_R(X^{r-1}).
\end{equation}

\begin{theorem}\label{thm:intro-maximal}
With the notation above, the nonsingular projective model of
\begin{equation}\label{eq:intro-maximal-curve}
 \X_{\alpha,t}:\qquad u^2z^r-z=\Psi_{R_{\alpha,t}}(u)
\end{equation}
is $\F_{q^2}$-maximal and satisfies
\[
 \Aut_{\K}(\X_{\alpha,t})=1.
\]
Its genus is
\[
 g(\X_{\alpha,t})=
 \begin{cases}
 (r^{m-1}-1)/2,&p\ne2,\\[1mm]
 r^{m-1}/2,&p=2.
 \end{cases}
\]
\end{theorem}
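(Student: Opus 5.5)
Theorem~\ref{thm:intro-maximal} follows from Theorem~\ref{thm:intro-rigidity} once $\X_{\alpha,t}$ is identified with $\X_R$ for $R=R_{\alpha,t}$ and the maximality of $\W_{R_{\alpha,t}}$ over $\F_{q^2}$ is established.

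\textbf{Step 1 (the model and the genus).} Here $h=m-1$ and $B_0=\alpha^2t\neq 0$, since $\alpha\ne0$ and $t\neq0$ (because $t^r+t\ne0$). Also $B_h=\alpha^{r^{m-1}+1}(t^{r^{m-1}}+t)$, and one checks that this is nonzero from $t^q=-t$: raising to the power $r$ relates $t^{r^{m-1}}+t$ to $t^{r^m}+t^r=-(t-t^r)$, which requires care with signs. With $u=x^D$ and $z=y/x^2$, the equation $y^r-y=xR(x)$ becomes $x^{2r}z^r-x^2z=x^2\Psi_R(x^{r-1})$. Dividing by $x^2$ gives $x^{2(r-1)}z^r-z=\Psi_R(x^{r-1})$.
\begin{itemize}
\item If $D=2(r-1)$, then $r$ is odd and every odd coefficient $B_j$ vanishes, so $\Psi_R$ is a polynomial in $x^{2(r-1)}=u$ and the equation is exactly \eqref{eq:intro-maximal-curve}.
\item If $D=r-1$, the equation reads $u^2z^r-z=\Psi_R(u)$ directly.
\end{itemize}
So $\X_{\alpha,t}$ is birational to $\X_R$ either way. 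For odd $p$ the coefficient $B_1=\alpha^{r+1}(t^r+t)\ne0$ is an odd-index coefficient (this uses $m\ge2$). Hence $D=r-1$ and $\epsilon=2$, and \eqref{eq:intro-genus} gives $g=(r-1)(r^{m-1}-1)/(2(r-1))=(r^{m-1}-1)/2$. For $p=2$ we have $D=r-1$ odd and $\epsilon=1$, giving $g=r^{m-1}/2$.

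\textbf{Step 2 (hypotheses of Theorem~\ref{thm:intro-rigidity}).} Since $r>3$, we have $r>2$ and $D=r-1>2\ge\epsilon$. The genus inequality reads $(r^{m-1}-1)/2>(r-1)^2$, which holds for $m\ge4$ because $r^{3}-1>2(r-1)^2$. Theorem~\ref{thm:intro-rigidity} therefore gives $\Aut_\K(\X_{\alpha,t})=1$.

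\textbf{Step 3 (maximality).} The group $C_D$ is defined over $\F_{q^2}$, because $D=r-1$ divides $q^2-1$. By \cite[Proposition~6]{Lachaud}, it then suffices to show that $\W_{R_{\alpha,t}}$ is $\F_{q^2}$-maximal. This is the main obstacle. I would use the quadratic-form description \cite{vdGvdV,BouwEtAl,CakcakOzbudak}. One has $\#\W_R(\F_{q^2})-1=\sum_{c}\#\{x\in\F_{q^2}:\Tr(cxR(x))=0\}$, where $\Tr$ is the trace from $\F_{q^2}$ to $\F_r$. Each form $Q_c(x)=\Tr(cxR(x))$ is an $\F_r$-quadratic form on $\F_{q^2}$. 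Maximality is equivalent to every nonzero $Q_c$ having radical of the maximal dimension with the extremal sign, so that the character sums attain $-q\,\cdot$ (the maximal possible size).

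To get there, I would compute the radical via the associated linearized polynomial $E(x)=\sum_j\big((cB_j)x^{r^j}+(cB_jx)^{r^{-j}}\big)$, raised to a suitable power. The choice of coefficients $\alpha^{r^j+1}(t^{r^j}+t)$ is designed so that the substitution $x\mapsto\alpha x$ reduces everything to the case $\alpha=1$. The form then becomes $\Tr(c(tx^2+\sum_j (t^{r^j}+t)x^{r^j+1}))$, which telescopes to something like $\Tr_{q^2/q}$ of a Hermitian-type expression $t\,x\,(\sum_{j} x^{r^j})\cdot(\cdot)$. I expect it to be equivalent to $N(x)$-type forms, whose zero counts are those of the Hermitian curve. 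Alternatively, and more simply, I would try to exhibit $\W_{R_{\alpha,t}}$ directly as a quotient of the Hermitian curve $Y^q+Y=X^{q+1}$ over $\F_{q^2}$. The idea is to write $y$ as an $\F_r$-linear (trace-like) function of $Y$ and $x=\alpha^{-1}X$ up to scaling, and then verify $y^r-y=xR(x)$ using $t^q=-t$. Maximality would then follow from Lachaud's result, and this route would also justify the abstract's claim that these curves are covered by the Hermitian curve.
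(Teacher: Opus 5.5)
Your Steps 1 and 2, and the reduction in Step 3 to the maximality of $\W_{R_{\alpha,t}}$ via Lachaud, match the paper. The gap is that this maximality is never established, and you flag it yourself as the main obstacle. Step 3 lists strategies (\emph{I expect}, \emph{I would try}) but produces neither a point count nor a covering.

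The missing idea is that the coefficients of $R_{\alpha,t}$ are chosen exactly so that the trace form factors. Put $T=\Tr_{\F_{q^2}/\F_r}$ and $A=\alpha x$. Expanding the product and using the Frobenius invariance of $T$ gives
\[
 T(A)\,T(tA)=\sum_{j=0}^{2m-1}T\bigl(t^{r^j}A^{r^j+1}\bigr).
\]
For $1\le j\le m-1$, the terms $j$ and $2m-j$ combine to $T\bigl((t^{r^j}+t)A^{r^j+1}\bigr)$; to see this, raise the latter to the $r^j$-th power inside $T$. The middle term $T(t^qA^{q+1})$ vanishes, since $A^{q+1}\in\F_q$ and $t^q+t=0$. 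Thus
\[
 T(xR_{\alpha,t}(x))=T(\alpha x)\,T(\alpha tx),
\]
a product of two $\F_r$-linear forms, which are independent because $t\notin\F_r$.

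The paper then applies \cite[Theorem~3.1]{CakcakOzbudak} (Proposition~\ref{prop:maximal-rank-two}). You could also finish by hand. The zero set of such a product has $2q^2/r-q^2/r^2$ elements, and there is one point at infinity. So $\W_{R_{\alpha,t}}$ has $r(2q^2/r-q^2/r^2)+1=q^2+1+q(r-1)r^{m-1}$ rational points. By \eqref{eq:genus-W} with $h=m-1$, this is $q^2+1+2q\,g(\W_{R_{\alpha,t}})$. Note that the relevant form is not of norm (Hermitian) type, as you guessed: it depends on only two $\F_r$-coordinates and splits.

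There are a few smaller points.

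\textbf{The check $B_{m-1}\neq0$ in Step 1 is unfinished.} It goes as follows. From $t^{r^{m-1}}+t=0$, raising to the $r$-th power and using $t^q=-t$ gives $t^r=t$. In odd characteristic, $t\in\F_r\subseteq\F_q$ together with $t^q=-t$ forces $t=0$. In characteristic $2$, the equality $t^r=t$ contradicts $t^r+t\neq0$. The same argument shows $t\notin\F_r$, which is exactly the independence needed above. Moreover, $B_{m-1}\neq0$ is what gives $h=m-1$ in both genus formulas and the degree hypothesis of the maximality criterion.

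\textbf{Your point-count identity is incorrect as written.} You state $\#\W_R(\F_{q^2})-1=\sum_c\#\{x:\Tr(cxR(x))=0\}$, but the affine count is $r\cdot\#\{x:\Tr(xR(x))=0\}$.

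\textbf{The case $D=2(r-1)$.} This case does not occur here, but if it did, the quotient equation would be $uz^r-z=\Psi_{R,D}(u)$ rather than \eqref{eq:intro-maximal-curve}.
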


The case $r=p\ge5$ gives genus $(q-p)/(2p)$.  In characteristic two we
take $r=4$.  Characteristic three is obtained by using the larger scalar
group arising from an even-support polynomial.  In fact, the construction
produces equations over the prime field in every characteristic.

\begin{theorem}\label{thm:intro-prime-field}
Let $n\ge2$.
\begin{enumerate}[\upshape (i)]
\item The curve over $\F_2$
\begin{equation}\label{eq:intro-binary}
 \X_{2,n}:\qquad
 u^2v^4+v
 =1+\sum_{\substack{1\le j\le3n-1\\3\nmid j}}
 u^{(4^j-1)/3}
\end{equation}
is maximal over $\F_{2^{12n}}$, has genus $2^{6n-3}$, and
\[
 \Aut_{\overline{\F}_2}(\X_{2,n})=1.
\]
\item For every odd prime $p$, the curve over $\F_p$
\begin{equation}\label{eq:intro-odd}
 \X_{p,n}:\qquad
 uv^p+v=-1-2\sum_{i=1}^{n}
 u^{(p^{2i}-1)/(2(p-1))}
\end{equation}
is maximal over $\F_{p^{4n+2}}$, has genus
\[
 \frac{p^{2n}-1}{4},
\]
and
\[
 \Aut_{\overline{\F}_p}(\X_{p,n})=1.
\]
\end{enumerate}
\end{theorem}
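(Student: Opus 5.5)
Both families will be treated as instances of the quotient construction $\X_R=\W_R/C_D$. Theorem~\ref{thm:intro-rigidity} will then give triviality of $\Aut_{\K}$. Maximality will be checked separately for the covering curve $\W_R$ through its quadratic form, and it passes to $\X_R$ by \cite[Proposition~6]{Lachaud}.

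\emph{Identification as $\X_R$.} For (ii), take $r=p$ and
\[
R(X)=-X-2\sum_{i=1}^{n}X^{p^{2i}}\in\F_p[X],
\]
so $h=2n$. Only even $j$ occur, so $D=2(p-1)$ and $\epsilon=2$. With $u=x^{D}$ and $z=y/x^2$, divide $y^p-y=xR(x)$ by $x^2$. Since $y^p/x^2=z^px^{2(p-1)}=uz^p$, this gives
\[
uz^p-z=\Psi_R(x^{p-1}).
\]
Here $x^{p^{2i}-1}=u^{(p^{2i}-1)/(2(p-1))}$, so $\Psi_R(x^{p-1})$ is a polynomial in $u$, and we recover \eqref{eq:intro-odd} with $v=z$. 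Formula \eqref{eq:intro-genus} gives
\[
g=\frac{(p-1)(p^{2n}-1)}{4(p-1)}=\frac{p^{2n}-1}{4}.
\]
For (i), take $r=4$ and $R(X)=X+\sum_{3\nmid j,\,1\le j\le 3n-1}X^{4^j}$. Then $h=3n-1$ (note $3\nmid 3n-1$), $D=3$, $\epsilon=1$, $u=x^3$ and $z=y/x^2$. Dividing $y^4-y=xR(x)$ by $x^2$ gives $u^2z^4+z=\Psi_R(u)$, which is \eqref{eq:intro-binary}, and the genus is
\[
g=\frac{3\cdot 4^{3n-1}}{6}=2^{6n-3}.
\]

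\emph{Triviality of $\Aut_{\K}$.} The hypotheses of Theorem~\ref{thm:intro-rigidity} hold in both cases. We have $r>2$, and $D>\epsilon$ since $2(p-1)>2$ for $p\ge3$ and $3>1$. Finally $g>(r-1)^2$ for $n\ge2$: we have $(p^{2n}-1)/4\ge(p^4-1)/4>(p-1)^2$ and $2^{6n-3}\ge 2^9>9$. Hence $\Aut_{\K}(\X_{p,n})=\Aut_{\K}(\X_{2,n})=1$. This step requires no further work; in particular $p=3$ is covered thanks to the even-support choice of $D$.

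\emph{Maximality (main obstacle).} Let $q=p^{2n+1}$ in (ii) and $q=4^{3n}$ in (i), and set $Q(x)=\Tr_{\F_{q^2}/\F_p}(xR(x))$. The standard point count \cite{vdGvdV,BouwEtAl,CakcakOzbudak} writes $\#\W_R(\F_{q^2})-q^2-1$ as a character sum governed by the radical of $Q$. That radical is the $\F_{q^2}$-rational kernel of the linearized polynomial $R(x)+R^{*}(x)$, whose roots $x$ satisfy $x^{r^{h}}$-degree equations, and its sign is governed by the discriminant of $Q$ for $p$ odd and by the Arf invariant for $p=2$. Since $\W_R$ has genus $(r-1)r^h/2$, maximality forces the kernel to have full size $r^{2h}$ inside $\F_{q^2}$ and $Q$ to be identically zero (or of the correct sign) on it. So I will compute $R+R^*$ explicitly; the coefficients lie in the prime field and $p^{4n+2}$ is compatible with the $p^{2i}$ shifts. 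Then I will show that all its roots lie in $\F_{q^2}$, by exhibiting it as a divisor of a polynomial of the shape $x^{q^2}-x$ composed with traces, and finally determine the sign.

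As a cross-check, and possibly a shortcut, I will instead look for $\lambda,\mu$ such that the substitution $x\mapsto\lambda x$, $y\mapsto\mu y$ carries $R$ to some $R_{\alpha,t}$ of \eqref{eq:intro-Rat}, or to a polynomial sharing its quadratic form over $\F_{q^2}$. This would reduce maximality to Theorem~\ref{thm:intro-maximal}, with $m=2n+1$ or $m=3n$; for $p=3$ and in (ii), $\X_{p,n}$ is then a further quotient by the extra scalar involution. Pinning down these parameters, or equivalently verifying the radical and sign conditions, is where I expect the real work to lie.
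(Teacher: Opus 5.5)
\textbf{There is a genuine gap. It starts with your identification in (ii).} Dividing $y^p-y=xR(x)$ by $x^2$ gives $uz^p-z=\Psi_{R,D}(u)$. With your $R=-X-2\sum_{i=1}^nX^{p^{2i}}$ you therefore get $uz^p-z=-1-2\sum_i u^{e_i}$, where $e_i=(p^{2i}-1)/(2(p-1))$. But \eqref{eq:intro-odd} has $uv^p+v$ on the left, and putting $v=z$ does not change the sign of the linear term.

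This is not cosmetic. Write $q=p^{2n+1}$ and $\tau=\Tr_{\F_{q^2}/\F_{p^2}}(x)$. Then $\Tr_{\F_{q^2}/\F_p}(xR(x))=-\Tr_{\F_{p^2}/\F_p}(\tau^2)$, and the binary form $\tau^2+\tau^{2p}$ on $\F_{p^2}$ is anisotropic precisely when $p\equiv1\pmod4$. For such $p$ your $\W_R$ has $1+p^{4n+1}$ points over $\F_{q^2}$, so it is $\F_{q^2}$-minimal, and so is its quotient. No verification of ``radical and sign'' can then give maximality.

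When $p\equiv3\pmod4$ every $e_i$ is even, so $(u,z)\mapsto(-u,-z)$ does carry your curve onto \eqref{eq:intro-odd}, but you did not observe this. The covering that works for every odd $p$ is $R=t\bigl(X+2\sum_{i=1}^nX^{p^{2i}}\bigr)$ with $t\in\F_{p^2}^\ast$ and $t^p=-t$, that is, $R_{1,t}$ with $r=p$ and $m=2n+1$. Its quotient is $uz^p-z=t\bigl(1+2\sum_i u^{e_i}\bigr)$, and $z=tv$ gives exactly \eqref{eq:intro-odd}. Your genus and rigidity checks carry over unchanged, since $D$ and $h$ are the same.

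\textbf{Separately, maximality is not proved in either part.} You describe two strategies and stop where, as you say, the real work lies. The missing idea is the right choice of $(\alpha,t)$ in Lemma~\ref{lem:trace-family}.

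For (ii), taking $\alpha=1$ and $t^p=-t$ kills the odd-index coefficients and gives $T(xR(x))=T(x)T(tx)$ with independent factors, so Proposition~\ref{prop:maximal-rank-two} applies directly. Your planned reduction to Theorem~\ref{thm:intro-maximal} cannot work here. Even support forces $t^p+t=0$, which is exactly the case excluded by Corollary~\ref{cor:maximal-parent} and Theorem~\ref{thm:intro-maximal}, and $p=3$ is also excluded there by $r>3$.

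For (i) your $R$ is correct and the shortcut does work, but it has to be carried out. Take $t\in\F_{64}$ of order $9$ and $\omega=t^3$. Then $(t^{4^j}+t)t^{4^j+1}$ equals $\omega$ if $3\nmid j$ and $0$ if $3\mid j$. Hence $x_0=tx$, $y_0=\omega y$ maps your $\W_R$ isomorphically onto $\W_{1,t}$ (with $r=4$, $m=3n$) over $\F_{64}\subseteq\F_{2^{12n}}$. Corollary~\ref{cor:maximal-parent} then applies, since $t\notin\F_4$ and $t^q=t$.
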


Since the automorphism group of the Hermitian curve $\mathcal{H}_q$ is large and has a rich subgroup lattice, many $\mathbb{F}_{q^2}$-maximal curves have been constructed as Galois subcovers $\mathcal{H}_q/G$. On the contrary, the natural and long-standing question on the existence of (non-rational, non-elliptic) $\F_{q^2}$-maximal subcovers of $\mathcal H_q$ which are not birationally equivalent to Galois subcovers of $\mathcal{H}_q$ was not answered until a very recent preprint \cite{MaWang}.
Curves with trivial geometric automorphism
group are natural candidates. Indeed, if $\mathcal X$ is isomorphic to
$\mathcal H_q/G$, then
\[
 N_{\Aut_{\K}(\mathcal H_q)}(G)/G
 \hookrightarrow \Aut_{\K}(\mathcal X);
\]
therefore, if $\Aut_{\K}(\mathcal X)=1$, the group $G$ must be
self-normalizing.
The preprint \cite{MaWang} by Ma and Wang provides an $\mathbb{F}_{27^2}$-maximal curve of genus $2$ which is 
the first known sporadic example of a maximal curve covered but not Galois covered by the corresponding
Hermitian curve.
Such curve is $\X_{3,1}$, obtained by allowing $n=1$ in
\eqref{eq:intro-odd}. Indeed, the equation $uv^3+v=u^2-1$ becomes their
equation $w^2=t^6-t+1$ upon setting $t=-v$ and $w=2u-v^3$.
This first member is outside the range of
Theorem~\ref{thm:intro-prime-field}; our result below extends the
sporadic example to the first known infinite families, one in each odd
characteristic.

All the maximal curves constructed in this paper are covered by the
Hermitian curve over the corresponding field of maximality. The nature
of this covering depends on the scalar subgroup. In
Section~\ref{sec:hermitian-coverings} we prove that it is Galois for
$D=r-1$. For $D=2(r-1)$ we obtain the following result.

\begin{theorem}\label{thm:uniform-nongalois}
Let $p$ be an odd prime, $n\ge2$, $m=2n+1$, and $q=p^m$.
The curve $\X_{p,n}$ of \eqref{eq:intro-odd} is covered by
$\mathcal H_q$ over $\F_{q^2}$, but is not isomorphic over $\K$ to
$\mathcal H_q/G$ for any subgroup $G\le\Aut_{\K}(\mathcal H_q)$.
\end{theorem}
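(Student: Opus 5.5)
The plan is to split the statement into its two assertions and treat them separately: the covering, which is arithmetic, and the non-Galois property, which is group-theoretic and uses the rigidity of $\X_{p,n}$.

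\emph{Covering.} We realize $\X_{p,n}$ as $\W_R/C_D$ with $r=p$, $D=2(p-1)$, $q=p^{2n+1}$, and $R=R_{\alpha,t}$ an even-support polynomial. The first task is to identify the two equations. Substituting $u=x^D$ and $z=y/x^2$ into $y^p-y=xR(x)$ gives $u z^p - z = \Psi_R(u^{1/2})$. We then rescale $z$ so that this matches \eqref{eq:intro-odd}. Next we show that $\W_R$ is covered by $\mathcal H_q$ over $\F_{q^2}$. The intended route is to write $xR(x)=L(x)^{q}\cdot x+\dots$, that is, to express $\W_R$ as a quotient of the Hermitian curve $Y^q+Y=X^{q+1}$ via an additive map $Y\mapsto y$ given by a trace-type $\F_p$-linear polynomial. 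This is the standard construction for maximal additive curves attached to a quadratic form of maximal type. Composing the Hermitian covering with the quotient map to $\X_{p,n}$, and noting that $\W_R\to\X_{p,n}$ is defined over $\F_{q^2}$, yields the covering.

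\emph{Non-Galois.} Suppose $\X_{p,n}\cong\mathcal H_q/G$. By Theorem~\ref{thm:intro-prime-field}, $\Aut_\K(\X_{p,n})=1$, so $N(G)=G$ in $\mathrm{PGU}(3,q)$, and $[\mathrm{PGU}(3,q):G]$ is the number of conjugates of $G$. We then pin $G$ down using Riemann--Hurwitz: $|G|\,(2g(\X)-2)\le 2g(\mathcal H_q)-2$ with $g(\X)=(p^{2n}-1)/4$, which gives $|G|\lesssim 2p^{2}$. Conversely $|G|=\deg$, so $|G|$ is bounded and in fact determined by the genus formula together with the ramification data. The key step is to compare this with the classification of subgroups of $\mathrm{PGU}(3,q)$ (Mitchell/Hartley) and the known genus formulas for $\mathcal H_q/G$. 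We would check case by case: $p$-subgroups inside a Sylow group, cyclic tame subgroups, and subgroups of the stabilizer of a point of $\mathcal H_q(\F_{q^2})$ of order $p^a d$. For each, we either show the quotient genus can never equal $(p^{2n}-1)/4$, or show that $G$ is properly contained in its normalizer. The second route works because small $p$-groups and small cyclic groups always have nontrivial normalizer quotients, for example the center of a Sylow $p$-subgroup or the diagonal torus. The candidates reduce to groups of order about $2p^{2}$, such as $E_p\rtimes C_{2}$ or groups fixing a point. For these, the normalizer in the point stabilizer $E_{q}\rtimes C_{q^2-1}$ always contains extra elements, for instance $\mathbb F_{q^2}^*$-scalings commuting with $G$ once $|G|$ is tiny relative to $q^2-1$. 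This contradicts $N(G)=G$.

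The main obstacle is the exhaustive case analysis over subgroups of $\mathrm{PGU}(3,q)$ with the right genus. I would try first to exploit self-normalization directly. Every subgroup of order at most $2p^2\ll q$ has a normalizer properly containing it: a nontrivial $p$-subgroup $P$ has $N(P)\supseteq Z(\text{Sylow})$-type elements, or otherwise a cyclic torus centralizes a tame element. This would dispatch all cases at once, with only a short genus check needed for the residual configuration.
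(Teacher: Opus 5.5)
\emph{Covering half.} This is fine in outline, but your specific mechanism ($xR(x)=L(x)^qx+\dots$, with only $Y\mapsto y$ changing) is not how the covering works: the $x$-coordinate must also be a degree-$p$ additive function of $X$. The clean route, which the paper takes, is to quote the Galois covering $\mathcal H_q\to\W_R$ of Cakcak--Ozbudak (their Theorem~6.12) and compose it with $\W_R\to\X_{p,n}$.

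\emph{Non-Galois half: the order estimate is wrong.} Since $g(\X_{p,n})=(p^{m-1}-1)/4=(q-p)/(4p)$, one has $2g(\X_{p,n})-2=(q-5p)/(2p)$, while $2g(\mathcal H_q)-2=(q+1)(q-2)$. Riemann--Hurwitz, together with $i(\sigma)\le q+2$, therefore gives
\[
 \frac{2pq^2}{(2p+1)q-p}\le|G|\le\frac{2p(q+1)(q-2)}{q-5p},
 \qquad\text{so}\qquad \frac{6q}{7}<|G|<3pq .
\]
Thus $|G|$ is comparable to $q$, not $\lesssim 2p^2$; you are off by a factor of roughly $p^{m-1}$.

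\emph{Why the normalizer strategy then fails.} Your heuristic that every small subgroup has a strictly larger normalizer has nothing to act on in this range, and it is in fact false there. For $3\mid m$, the group $\mathrm{PGL}(2,t)$ with $t^3=q$, sitting in a conic stabilizer, has order $q-t$. It is self-normalizing in $\mathrm{PGU}(3,q)$: it is maximal in $\mathrm{PGL}(2,q)$ and fixes a unique conic. The paper can only exclude it by computing that the quotient genus exceeds $q/3$. Similarly, for $m=7$ the subfield group $\mathrm{PGU}(3,p)$ has order between $q$ and $pq$. It is excluded only by a genus computation: $g>p^6/3$, and $g=351\ne182$ when $p=3$. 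Inside the stabilizer of a point of $\mathcal H_q$, the candidates are groups of elements $[\lambda,b,c]$ of order $Ap^{j+k}$. Scalings do not commute with translations having $b\ne0$, so your ``$\F_{q^2}^*$-scalings commuting with $G$'' argument does not apply. The paper instead needs the exact equation $2p(q-p^k)\bigl(q-(d-1)p^j\bigr)=Ap^{j+k}(q-p)$, Zsigmondy's theorem, and the commutator-subspace bound $\dim V\le2\dim V_0$ to rule these out.

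\emph{What is missing.} The proposal lacks the actual substance of the proof. Self-normalization should be used only where it genuinely applies:
\begin{itemize}
\item $G$ fixes no point off $\mathcal H_q$, since otherwise the central homology group $C_{q+1}$ would lie in $G$ and make the quotient rational;
\item the case $A=d$ in the point stabilizer;
\item the case $G\le C_L$ in a Singer triangle stabilizer;
\item cyclic and dihedral subgroups of $\mathrm{PGL}(2,q)$.
\end{itemize}
After that, you must go through every class of maximal subgroups of $\mathrm{PGU}(3,q)$ and solve the genus equation in each. These classes are point stabilizers on $\mathcal H_q$, self-polar triangle stabilizers, Singer triangle stabilizers, conic stabilizers, exceptional groups, and unitary subfield groups. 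The self-polar triangle case alone requires a finite computer check. None of this case analysis can be replaced by the ``small group, large normalizer'' shortcut.
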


The proof uses the genus and the triviality of the geometric
automorphism group of $\X_{p,n}$ to restrict possible Galois groups. As $n$
varies, the genera $(p^{2n}-1)/4$ strictly increase, giving infinitely
many pairwise geometrically nonisomorphic curves in each odd
characteristic. Here the conclusion is geometric:
there is no Galois morphism even over $\K$.

Section~\ref{sec:scalar} gives the quotient equation and its ramification.
In Section~\ref{sec:rigidity} we prove that the quotient has trivial
automorphism group under the stated genus condition.  Section~\ref{sec:maximal-covers}
contains the point-counting input, and Sections~\ref{sec:maximal-quotients}
and~\ref{sec:prime-field} give the maximal families and the equations over
prime fields.  Section~\ref{sec:hermitian-coverings} treats their
Hermitian coverings.

\section{Scalar quotients of additive curves}
\label{sec:scalar}

Throughout this section we work over $\K$; no maximality assumption is made.
Let $r=p^e>2$ and let
\begin{equation}\label{eq:R-general}
 R(X)=\sum_{j=0}^{h}B_jX^{r^j}\in \K[X],
 \qquad h\ge1,\qquad B_0B_h\ne0.
\end{equation}
Let $\W_R$ be the nonsingular projective model of the affine curve
\begin{equation}\label{eq:W-general}
 y^r-y=xR(x).
\end{equation}
Since $\deg(xR(x))=r^h+1$ is prime to $p$, the standard theory of one-pole
additive extensions gives
\begin{equation}\label{eq:genus-W}
 g(\W_R)=\frac{(r-1)r^h}{2};
\end{equation}
see, for example, \cite[Proposition~3.7.10]{Stichtenoth}.

Define $D$ as in \eqref{eq:intro-D}.  Then
\begin{equation}\label{eq:CD}
 C_D=\{\rho_a:(x,y)\mapsto(ax,a^2y):a^D=1\}
\end{equation}
acts on $\W_R$.  Indeed, $a^2\in\F_r$ and
$a^{r^j-1}=1$ whenever $B_j\ne0$, so that
\[
 (ax)R(ax)=a^2xR(x).
\]
When $r=p$ is odd, the corresponding scalar subgroup occurs in the description of the stabilizer of the point at infinity in
\cite[Theorem~13.3]{vdGvdV} and \cite[Theorem~1.4.3]{BouwEtAl}.

Let
\[
 \X_R=\W_R/C_D,
 \qquad \epsilon=\gcd(2,D).
\]
Define $\Psi_{R,D}$ by
\begin{equation}\label{eq:Psi-D}
 R(X)=X\Psi_{R,D}(X^D),
\end{equation}
that is,
\[
 \Psi_{R,D}(U)=B_0+
 \sum_{\substack{1\le j\le h\\B_j\ne0}}
 B_jU^{(r^j-1)/D}.
\]
The exponents are integers by the definition of $D$.

\begin{proposition}\label{prop:quotient}
The quotient curve $\X_R=\W_R/C_D$ has function field generated by
\[
 u=x^D,\qquad z=\frac{y}{x^2}
\]
and these functions satisfy
\begin{equation}\label{eq:quotient-general}
 u^{2(r-1)/D}z^r-z=\Psi_{R,D}(u).
\end{equation}
Moreover,
\begin{equation}\label{eq:genus-X}
 g(\X_R)=\frac{(r-1)(r^h+1-\epsilon)}{2D}.
\end{equation}
The function $u$ defines a morphism
\[
 u:\X_R\longrightarrow\PP^1
\]
of degree $r$.  The fibre over infinity consists of one point of ramification
index $r$.  The fibre over zero contains one unramified point and
$(r-1)\epsilon/D$ points of ramification index $D/\epsilon$.  No other value is
a branch point.
\end{proposition}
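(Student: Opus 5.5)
We compute the invariant field of $C_D$ acting on $\K(x,y)$, then read off ramification and genus from the tower $\K(u)\subset\K(x)\subset\K(x,y)$.

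\textbf{Step 1: the invariant field.} The functions $u=x^D$ and $z=y/x^2$ are $C_D$-invariant, since $\rho_a(x)^D=a^Dx^D$ and $\rho_a(y)/\rho_a(x)^2=a^2y/(a^2x^2)$. Dividing $y^r-y=xR(x)=x^2\Psi_{R,D}(x^D)$ by $x^2$ gives
\[
x^{2(r-1)}z^r-z=\Psi_{R,D}(u).
\]
Here $x^{2(r-1)}=u^{2(r-1)/D}$, and this exponent is an integer because $D\mid 2(r-1)$. So the relation \eqref{eq:quotient-general} holds.

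\textbf{Step 2: degrees.} We have $[\K(x):\K(u)]=D$, with Galois group acting by $x\mapsto ax$. The extension $\K(x,y)/\K(x)$ has degree $r$, and $C_D$ acts faithfully on $\K(x,y)$. Hence $[\K(x,y):\K(x,y)^{C_D}]=D$ and $[\K(x,y)^{C_D}:\K(u)]=r$.

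Equation \eqref{eq:quotient-general} is of degree $r$ in $z$. It is irreducible over $\K(u)$ because
\[
[\K(u,z):\K(u)]\ \ge\ [\K(x,y):\K(x)]\cdot[\K(x):\K(u)]/D=r,
\]
using $\K(x,y)=\K(x,z)$. Therefore $\K(u,z)$ is the invariant field and $u$ has degree $r$.

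\textbf{Step 3: ramification in $\K(x,y)/\K(u)$.} The extension $\K(x)/\K(u)$ is totally ramified over $u=0$ and $u=\infty$ and unramified elsewhere. The extension $\K(x,y)/\K(x)$ is ramified only over $x=\infty$, totally, with index $r$. So the total extension $\K(x,y)/\K(u)$ has degree $rD$ and is ramified only over $u=0$ and $u=\infty$.

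\begin{itemize}
\item Over $\infty$ there is a unique point of $\W_R$, with index $rD$. Its image in $\X_R$ is therefore the unique point over $u=\infty$, with index $r$.
\item Over $u=0$ lie $x=0$ and the $r$ points $(0,c)$ with $c\in\F_r$. Each has index $D$ over $\K(u)$, since $\K(x,y)/\K(x)$ is unramified there.
\end{itemize}

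\textbf{Step 4: the fibre over $0$ via stabilizers.} We need the orbits of $C_D$ on $\{(0,c):c\in\F_r\}$, where $\rho_a(0,c)=(0,a^2c)$.

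\begin{itemize}
\item The point $(0,0)$ is fixed by all of $C_D$. Its image has index $D/D=1$ over $u$, so it is the unramified point.
\item For $c\neq0$, the stabilizer is $\{a : a^2=1\}\cap C_D$, which has order $\epsilon$. The orbits have size $D/\epsilon$, so there are $(r-1)\epsilon/D$ of them. Each image point has index $D/\epsilon$ over $u$, as ramification indices are multiplicative in the tower.
\end{itemize}

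As a check, the degree sum is $1+\frac{(r-1)\epsilon}{D}\cdot\frac{D}{\epsilon}=r$.

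\textbf{Step 5: the genus by Riemann--Hurwitz applied to $\W_R\to\X_R$.} This is the step needing most care. The inertia groups are as follows: $C_D$ at $\infty$ and at $(0,0)$, the group of order $\epsilon$ at the $r-1$ points $(0,c)$, and trivial elsewhere, since $\rho_a$ moves points with $x\neq0,\infty$ when $a\ne1$. Whether $\rho_{-1}$ has further fixed points is irrelevant, because it only fixes points with $x=0$ or $\infty$.

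Everything is tame, since $p\nmid D$. Riemann--Hurwitz with \eqref{eq:genus-W} gives
\[
(r-1)r^h-2=D\bigl(2g(\X_R)-2\bigr)+2(D-1)+(r-1)(\epsilon-1).
\]
Solving,
\[
2Dg(\X_R)=(r-1)r^h-(r-1)(\epsilon-1)=(r-1)(r^h+1-\epsilon),
\]
which is \eqref{eq:genus-X}.

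The main obstacle is verifying that there are no unexpected fixed points of $C_D$, in particular none among the points over $x=\infty$ beyond the single one, and confirming that $\epsilon$ is the correct stabilizer order. This is immediate from the explicit action.
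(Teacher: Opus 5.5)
Your proof is correct and follows essentially the same route as the paper. You identify $\K(u,z)$ as the fixed field of $C_D$ by a degree count, read off the stabilizers ($C_D$ at $\infty$ and $(0,0)$, order $\epsilon$ at the points $(0,c)$ with $c\in\F_r^\ast$, trivial elsewhere), apply tame Riemann--Hurwitz to $\W_R\to\X_R$, and deduce the ramification of $u$ from the tower $\W_R\to\X_R\to\PP^1_u$; you simply spell out details the paper's terse proof leaves implicit, namely the degree argument for the fixed field and the orbit count on the fibre over $0$.
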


\begin{proof}
The functions $u$ and $z$ are fixed by $C_D$.  Since $x^D=u$ and
$y=x^2z$, one has
\[
 \K(\W_R)=\K(u,z)(x),
\]
and the $D$ scalar automorphisms show that $\K(u,z)$ is exactly the fixed
field of $C_D$.  Substituting $y=x^2z$ into \eqref{eq:W-general} gives
\eqref{eq:quotient-general}.

For the tame quotient morphism $\W_R\to\X_R$, the point at infinity and $(0,0)$ have
stabilizer $C_D$.  Each point $(0,b)$, with $b\in\F_r^\ast$, has stabilizer
of order $\epsilon$, and all other stabilizers are trivial.  Riemann--Hurwitz,
together with \eqref{eq:genus-W}, gives \eqref{eq:genus-X}.  The stated
ramification of $u$ follows from the tower
\[
 \W_R\longrightarrow\X_R\longrightarrow\PP^1_u,
 \qquad u=x^D.
\]
\end{proof}

\section{Rigidity of the quotient}
\label{sec:rigidity}

We determine the full automorphism group of $\X_R$.  If the genus is
sufficiently large, the morphism $u:\X_R\to\PP^1$ is preserved by every
automorphism of $\X_R$.  Its two branch fibres then restrict the possible
induced automorphisms of $\PP^1$.
Put
\[
 L=\K(\W_R),\qquad F=\K(\X_R).
\]
The composite morphism $\W_R\to\X_R\to\PP^1$ is Galois; equivalently,
$L/\K(u)$ is Galois with group
\begin{equation}\label{eq:G}
 G=V\rtimes C_D,
 \qquad
 V=\{(x,y)\mapsto(x,y+b):b\in\F_r\}\simeq(\F_r,+).
\end{equation}
Conjugation by $C_D$ acts on $V$ through multiplication by
\[
 H=\{a^2:a^D=1\}\subseteq\F_r^\ast.
\]

\begin{lemma}\label{lem:no-intermediate}
The morphism $u:\X_R\to\PP^1$ has no nontrivial factorization through an
intermediate curve.  Equivalently, there is no proper intermediate field
between $\K(u)$ and $F$.
\end{lemma}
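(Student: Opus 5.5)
Since $L/\K(u)$ is Galois with group $G=V\rtimes C_D$ and $F=L^{C_D}$, Galois theory turns intermediate fields $\K(u)\subseteq M\subseteq F$ into subgroups $C_D\le K\le G$. We will show that $C_D$ is maximal in $G$, so that only $K=C_D$ and $K=G$ occur. The conditions $r>2$ and $D>\epsilon$ from Theorem~\ref{thm:intro-rigidity} should enter exactly through this maximality.

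\textbf{Reduction to $H$-submodules of $V$.} Let $C_D\le K\le G$. Since $G=VC_D$, we have $K=(K\cap V)C_D$. Put $W=K\cap V$. Then $W$ is an additive subgroup of $V\simeq(\F_r,+)$, and it is normalized by $C_D$. So $W$ is stable under multiplication by $H=\{a^2:a^D=1\}\subseteq\F_r^\ast$, hence stable under multiplication by every element of the subring $\F_p[H]$. As $H$ consists of elements of the finite field $\F_r$, this subring $\F_p[H]$ is a subfield $\F_{p^f}$ of $\F_r$. It follows that $W$ is an $\F_{p^f}$-subspace. Conversely, any such $W$ gives a subgroup $WC_D$. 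Thus there is no proper intermediate field exactly when $V$ has no proper nonzero $H$-stable subgroup, which holds if $\F_p[H]=\F_r$, i.e. $V$ is one-dimensional over $\F_p[H]$.

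\textbf{Computing $H$.} If $D=r-1$, then $H=\{a^2:a\in\F_r^\ast\}$. For odd $r$ this is the group of squares, of order $(r-1)/2$. For even $r$ it is all of $\F_r^\ast$. If $D=2(r-1)$, then $a^2$ runs over all of $\F_r^\ast$, so $H=\F_r^\ast$ and $\F_p[H]=\F_r$ trivially.

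\textbf{Main obstacle: the squares in odd characteristic.} The remaining case is $r$ odd with $D=r-1$, where we must show that the squares of $\F_r^\ast$ generate $\F_r$ as a field. Suppose they lay in a proper subfield $\F_{p^f}$ with $f<e$ and $f\mid e$. Counting gives
\[
(r-1)/2\le p^{f}-1\le p^{e/2}-1,
\]
which is impossible since $r=p^e\ge3$: indeed $p^e-1>2(p^{e/2}-1)$ always holds. Hence $\F_p[H]=\F_r$ in all cases. So $V$ is an irreducible $\F_p[C_D]$-module, $C_D$ is maximal in $G$, and the lemma follows by the Galois correspondence.

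This argument appears not to need $D>\epsilon$ or $r>2$ beyond ensuring that $r$ is odd when $D=r-1$ is even. We would still double-check the degenerate case $r=3$, $D=2=\epsilon$, where $H=\{1\}$ and $\F_p[H]=\F_3=\F_r$, so the claim holds there as well.
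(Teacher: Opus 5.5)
Your proof is correct and follows the paper's argument. Both reduce via $K=(K\cap V)C_D$ to $H$-stable subgroups of $V\simeq\F_r$, then show that $H$ generates $\F_r$; in the odd case with $D=r-1$, both use $|H|=(r-1)/2>\sqrt r-1$ against the size of a proper subfield. Your closing remark is also right: the lemma needs neither $D>\epsilon$ nor $r>2$, so drop your opening claim that these hypotheses enter here.
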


\begin{proof}
The subgroup $H$ generates $\F_r$ over $\F_p$.  Indeed, if $r$ is even or
$D=2(r-1)$, then $H=\F_r^\ast$.  In the remaining case $H$ is the subgroup
of squares.  If $r$ is not prime, its order $(r-1)/2$ is larger than
$\sqrt r-1$, whereas the multiplicative group of a proper subfield has at
most $\sqrt r-1$ elements.  The prime case is immediate.

If $C_D\le J\le G$, then $J=(J\cap V)C_D$, and $J\cap V$ is stable under
$H$.  Hence $J\cap V$ is an $\F_r$-subspace of $V=\F_r$, so it is either
zero or all of $V$.  Galois correspondence gives the result.
\end{proof}

Let $f_1,f_2:\mathcal C\to\PP^1$ be morphisms of degrees $d_1,d_2$.
If $\K(\mathcal C)=\K(f_1,f_2)$, the Castelnuovo--Severi inequality gives
\begin{equation}\label{eq:CS}
 g(\mathcal C)\le(d_1-1)(d_2-1);
\end{equation}
see \cite[Theorem~3.11.3]{Stichtenoth}.

\begin{lemma}\label{lem:pencil}
Assume
\begin{equation}\label{eq:rigidity-hyp}
 D>\epsilon,
 \qquad
 g(\X_R)>(r-1)^2.
\end{equation}
Then every $\sigma\in\Aut_{\K}(\X_R)$ satisfies
\[
 \sigma(u)=\lambda u
\]
for some $\lambda\in \K^\ast$.  Moreover, $\sigma$ lifts to an automorphism
$\widetilde\sigma$ of $\W_R$ such that
\[
 \widetilde\sigma(x)=ax,
 \qquad a^D=\lambda,
\]
and the lift centralizes $C_D$.
\end{lemma}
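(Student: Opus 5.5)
Let $\sigma\in\Aut_{\K}(\X_R)$ and put $u'=\sigma(u)$, which also has degree $r$. The first step is to show $\K(u')=\K(u)$. If $\K(u,u')=F$, then \eqref{eq:CS} with $d_1=d_2=r$ gives $g(\X_R)\le(r-1)^2$, against \eqref{eq:rigidity-hyp}. So $\K(u,u')$ is a proper subfield of $F$ containing $\K(u)$. By Lemma~\ref{lem:no-intermediate} it must equal $\K(u)$, so $u'\in\K(u)$. Since $[F:\K(u')]=[F:\K(u)]=r$, we get $\K(u')=\K(u)$. Hence $u'=(\alpha u+\beta)/(\gamma u+\delta)$, and $\sigma$ induces an automorphism $\bar\sigma$ of $\PP^1_u$ with $u\circ\sigma=\bar\sigma\circ u$.

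Next, $\bar\sigma$ permutes the branch points of $u$ and preserves their ramification structure. By Proposition~\ref{prop:quotient} the branch points are exactly $\infty$ and $0$. The hypothesis $D>\epsilon$ guarantees that $0$ really is a branch point, because its ramification index $D/\epsilon$ exceeds $1$. The two fibres have different shapes: over $\infty$ there is a single point of index $r$, while over $0$ there are $1+(r-1)\epsilon/D\ge2$ points. So $\bar\sigma$ fixes both $0$ and $\infty$, which means $\sigma(u)=\lambda u$ with $\lambda\in\K^\ast$.

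For the lift, choose $a$ with $a^D=\lambda$. The field $L=F(x)$ is obtained from $F$ by adjoining a $D$-th root of $u$ (Kummer, since $D\mid r-1$ is prime to $p$). Extend $\sigma$ to an isomorphism $\widetilde\sigma$ of $L$ onto the field $\sigma(F)(x')=F(x')$, where $x'^D=\sigma(u)=\lambda u$. Taking $x'=ax$ is a valid $D$-th root of $\lambda u$ lying in $L$, so the extension $\widetilde\sigma(x)=ax$, $\widetilde\sigma|_F=\sigma$ is well defined as an automorphism of $L=F(x)$. Concretely, $\widetilde\sigma(y)=\widetilde\sigma(x^2z)=a^2x^2\sigma(z)\in L$. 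This $\widetilde\sigma$ is an automorphism of $L$ because it maps $F$ onto $F$ and $x$ to a generator of $L$ over $F$.

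Finally, $\widetilde\sigma$ centralizes $C_D$. For $\rho_b\in C_D$, the commutator $\widetilde\sigma\rho_b\widetilde\sigma^{-1}\rho_b^{-1}$ acts trivially on $F$. It also fixes $x$, since $\widetilde\sigma\rho_b\widetilde\sigma^{-1}(x)=bx$ because $\widetilde\sigma$ sends $x$ to a scalar multiple of $x$. Hence the commutator is trivial, and $\widetilde\sigma$ commutes with each $\rho_b$.

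The main obstacle is the first step: making sure that Lemma~\ref{lem:no-intermediate} applies cleanly, so that the only proper subfield of $F$ containing $\K(u)$ is $\K(u)$ itself. The rest is routine.
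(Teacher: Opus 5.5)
Your proof is correct and is essentially the paper's argument: Castelnuovo--Severi together with Lemma~\ref{lem:no-intermediate} forces $\K(\sigma(u))=\K(u)$, the different fibre shapes over $0$ and $\infty$ (this is where $D>\epsilon$ is used) give $\sigma(u)=\lambda u$, and the lift $x\mapsto ax$ centralizes $C_D$ because conjugation preserves the scalar by which each $\rho_b$ acts on $x$. One small slip: $D$ need not divide $r-1$, since it can equal $2(r-1)$. This does not affect the argument, because all you actually use is that $X^D-u$ is the minimal polynomial of $x$ over $F$, and that follows from $[L:F]=|C_D|=D$ in every case.
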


\begin{proof}
The rational subfields $\K(u)$ and $\K(\sigma(u))$ both have index $r$ in $F$.
If they were distinct, Lemma~\ref{lem:no-intermediate} would give
\[
 F=\K(u,\sigma(u)),
\]
and \eqref{eq:CS} would imply $g(\X_R)\le(r-1)^2$, contrary to
\eqref{eq:rigidity-hyp}.  Thus $\sigma$ preserves $\K(u)$ and induces an
automorphism of the quotient curve $\PP^1$.

By Proposition~\ref{prop:quotient}, the only branch points of $u$ are zero
and infinity, and the ramification types of their fibres are different because
$D>\epsilon$.  Hence the induced M\"obius transformation fixes both values,
so $\sigma(u)=\lambda u$.

Since $L=F(x)$ and $x^D=u$, choosing $a^D=\lambda$ gives a lift with
$x\mapsto ax$.  This lift normalizes $C_D$.  Conjugation preserves the scalar
by which an element of $C_D$ acts on $x$, and therefore fixes each element of
$C_D$.
\end{proof}

\begin{theorem}\label{thm:rigidity}
Under the hypotheses \eqref{eq:R-general}, \eqref{eq:intro-D}, and
\eqref{eq:rigidity-hyp}, one has
\[
 \Aut_{\K}(\X_R)=1.
\]
\end{theorem}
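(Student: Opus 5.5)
By Lemma~\ref{lem:pencil}, every $\sigma\in\Aut_{\K}(\X_R)$ has a lift $\widetilde\sigma\in\Aut_{\K}(\W_R)$ with $\widetilde\sigma(x)=ax$ that centralizes $C_D$. The plan is to determine all such automorphisms of $\W_R$ explicitly and show that each one already lies in $C_D$, so that $\sigma$ is the identity on $F=L^{C_D}$.

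\textbf{Step 1: form of $\widetilde\sigma(y)$.} Write $\widetilde\sigma(y)=\eta$. Since $\widetilde\sigma$ fixes $\K(x)$ setwise, $\eta$ generates $L$ over $\K(x)$ and satisfies
\[
 \eta^r-\eta=a x R(ax).
\]
Hence $\eta-cy$ lies in $\K(x)$ after an appropriate normalization. More precisely, the Artin--Schreier-type extensions $y^r-y=xR(x)$ and $\eta^r-\eta=axR(ax)$ define the same field over $\K(x)$. By the standard classification of such additive extensions (Stichtenoth, Prop.~3.7.10 style), we get $\eta=cy+w(x)$ with $c\in\F_r^\ast$ and $w\in\K[x]$ a polynomial, because the only pole is at infinity. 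This gives the identity
\[
 w^r-w=axR(ax)-c\,xR(x).
\]

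\textbf{Step 2: use centralizing $C_D$.} From $\rho_b\widetilde\sigma=\widetilde\sigma\rho_b$ applied to $y$, we get $cb^2y+w(bx)=b^2(cy+w(x))$, so $w(bx)=b^2w(x)$ for all $b$ with $b^D=1$. Thus $w$ is supported on exponents $k\equiv2\pmod D$.

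Next compare degrees in the displayed identity. The right side has degree at most $r^h+1$, so $\deg w\le (r^h+1)/r$. Every monomial of $w^r-w$ of degree $>1$ has exponent a multiple of $r$ or at most $\deg w$. The right side equals $\sum_j B_j(a^{r^j+1}-c)x^{r^j+1}$, whose exponents $r^j+1$ are $\equiv1\pmod r$ and are never multiples of $r$. Matching monomials then forces $w^r-w$ to have no monomials of degree $\ge2$ except those cancelling internally, which is impossible unless $w$ is constant. So $w\in\F_r$, and by the congruence condition $w=0$ (since $0\not\equiv2$ when $D>2$; the case $D=2$ is excluded since $r>3$ or $D\ge r-1$ and $D>\epsilon$). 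Consequently $a^{r^j+1}=c$ for every $j$ with $B_j\ne0$.

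\textbf{Step 3: conclude.} Take $j=0$: $a^2=c\in\F_r^\ast$. Take $j=h$: $a^{r^h+1}=a^2$, so $a^{r^h-1}=1$, and similarly $a^{r^j-1}=1$ for all $j$ in the support. Combined with $a^{2(r-1)}=1$, this should give $a^D=1$. When $D=r-1$ in the "otherwise" case, some odd $j$ has $B_j\neq0$, or $r$ is even. The relevant computation is $\gcd(2(r-1),r^j-1)=r-1$ for odd $j$ and odd $r$, while for even $r$ we have $a^{r-1}=1$ directly since $c^{r-1}=1$ and squaring is bijective. Then $\widetilde\sigma=\rho_a\in C_D$ acts trivially on $F$, so $\sigma=1$.

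I expect the main obstacle to be Step 1's claim that $\eta=cy+w$ with $w$ a polynomial, together with the degree bookkeeping in Step 2. The cleanest route is to note that $\eta-cy$ must be fixed by the translation group $V$ up to constants, hence lies in $\K(x)$, and is integral over $\K[x]$.
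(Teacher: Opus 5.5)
Your overall strategy is the paper's, and your Steps~2 and~3 reproduce its argument: a degree comparison makes $w$ constant, the centralizer condition with $D>\epsilon$ forces $w=0$, and then $c=a^2$ and the case analysis give $a^D=1$. The slips there are harmless. The exponent $r^0+1=2$ is not $\equiv1\pmod r$, but it is still not divisible by $r>2$. Also, $w=0$ follows simply because $D>\epsilon=\gcd(2,D)$ forces $D\ge3$.

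The genuine gap is Step~1 when $r=p^e$ with $e>1$. No standard result says that two generators $y,\eta$ of $L/\K(x)$ with $y^r-y,\ \eta^r-\eta\in\K(x)$ differ by a scalar from $\F_r^\ast$ up to an element of $\K(x)$. Stichtenoth's Proposition~3.7.10 describes the Galois group, ramification and genus, not all generators. What one actually gets is this: $\tau(\eta)-\eta\in\F_r$ for every $\tau\in V$, so $\tau_b(\eta)=\eta+\phi(b)$ for some $\F_p$-linear bijection $\phi$ of $\F_r$. Hence $\eta=\sum_{i=0}^{e-1}c_iy^{p^i}+w$ with $c_i\in\F_r$ and $w\in\K(x)$. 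Non-scalar generators really occur: $\eta=y^p$ generates $L$ over $\K(x)$ and satisfies $\eta^r-\eta=(xR(x))^p$. Your ``cleanest route'' assumes exactly what has to be proved, namely that $\phi$ is multiplication by a constant. For $r=p$ this is automatic, since every $\F_p$-linear map of $\F_p$ is a scalar.

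The paper closes the gap with an input you already have but do not use in Step~1. Since $\widetilde\sigma(x)=ax$, the lift normalizes $V=\mathrm{Gal}(L/\K(x))$. Since it also centralizes $C_D$, the induced $\F_p$-linear map on $V\simeq\F_r$ commutes with multiplication by $H=\{a_0^2:a_0^D=1\}$. By the first paragraph of the proof of Lemma~\ref{lem:no-intermediate}, $H$ generates $\F_r$ over $\F_p$. So this map is $\F_r$-linear, i.e.\ multiplication by some $d\in\F_r^\ast$, which gives $\widetilde\sigma(y)=dy+B(x)$.

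Alternatively, you can stay within your own degree method. Substituting the general form gives $axR(ax)=\sum_i c_i\,(xR(x))^{p^i}+w^r-w$, so $w$ has no finite poles. Suppose $i_0\ge1$ were the largest index with $c_{i_0}\ne0$. Then $\sum_i c_i(xR(x))^{p^i}-axR(ax)=-(w^r-w)$ would have degree exactly $p^{i_0}(r^h+1)>r^h+1$. This degree is $\equiv p^{i_0}\not\equiv0\pmod r$, because $h\ge1$ and $i_0<e$. But $w^r-w$ is either constant or of degree divisible by $r$, a contradiction. Hence $c_i=0$ for $i\ge1$. With either patch, the rest of your proof goes through.
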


\begin{proof}
Let $\sigma\in\Aut_{\K}(\X_R)$ and let $\widetilde\sigma$ be the lift from
Lemma~\ref{lem:pencil}, with $\widetilde\sigma(x)=ax$.

When $r=p$ is odd, the description of automorphisms fixing the point at
infinity in \cite[Proposition~1.3.2 and Lemma~1.4.1]{BouwEtAl} gives
\begin{equation}\label{eq:lift-form}
 \widetilde\sigma(y)=dy+b,
 \qquad d\in\F_r^\ast,\quad b\in\F_r,
 \qquad aR(aX)=dR(X).
\end{equation}
For $r=p^e$ with $e>1$, the same form follows as follows.  The lift normalizes
the translation group $V$ and centralizes $C_D$.  Its induced action on $V$
is therefore an $\F_p$-linear map commuting with $H$, hence is
$\F_r$-linear.  Thus
\[
 \widetilde\sigma(y)=dy+B(x),
 \qquad d\in\F_r^\ast,
 \]
with $B(x)\in \K(x)$.  The defining equation gives
\begin{equation}\label{eq:B-comparison}
 (ax)R(ax)-dxR(x)=B(x)^r-B(x).
\end{equation}
A finite pole of $B$ is impossible, so $B\in \K[x]$.  If $B$ were
nonconstant, the degree on the right would be divisible by $r$, whereas all
possible exponents on the left are $2$ and $r^j+1$.  Hence $B=b$ is constant,
and \eqref{eq:lift-form} follows.

Since $\widetilde\sigma$ centralizes $C_D$, one has
\[
 b=a_0^2b
\]
for every $a_0$ with $a_0^D=1$.  The condition $D>\epsilon$ gives an element
with $a_0^2\ne1$, so $b=0$.  Comparing the coefficient of $X$ in
$aR(aX)=dR(X)$ gives $d=a^2$.

If $r$ is even, $a^2\in\F_r$ implies $a\in\F_r^\ast$, hence $a^D=1$.
If $r$ is odd and $D=2(r-1)$, the same conclusion follows from
$a^2\in\F_r^\ast$.  Finally, suppose that $r$ is odd and $D=r-1$.  By the
definition of $D$ there is an odd index $j$ with $B_j\ne0$.  Comparing that
coefficient gives $a^{r^j-1}=1$.  Since
\[
 \frac{r^j-1}{r-1}=1+r+\cdots+r^{j-1}
\]
is odd, while $a^{2(r-1)}=1$, we obtain $a^{r-1}=1$.  Thus in every case
$\widetilde\sigma\in C_D$, so its action on $F=L^{C_D}$ is trivial.
\end{proof}

\begin{corollary}\label{cor:gonality}
Under the hypotheses of Theorem~\ref{thm:rigidity},
\[
 \gon(\X_R)=r.
\]
Moreover, the map $u:\X_R\to\PP^1$ is the unique degree-$r$ map to the
projective line, up to an automorphism of $\PP^1$.
\end{corollary}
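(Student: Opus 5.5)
The plan is to squeeze the gonality between an upper bound coming from $u$ and a lower bound coming from Castelnuovo--Severi, and then run the same inequality once more to get uniqueness. The upper bound is immediate: by Proposition~\ref{prop:quotient}, $u:\X_R\to\PP^1$ has degree $r$, so $\gon(\X_R)\le r$.

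For the lower bound, suppose $f:\X_R\to\PP^1$ is a nonconstant morphism of degree $d<r$. I would show first that $\K(u)\not\supseteq\K(f)$ and $\K(f)\ne\K(u)$. The second is clear because the degrees differ. For the first, if $\K(f)\subseteq\K(u)$, then $[F:\K(f)]$ would be a multiple of $[F:\K(u)]=r$, which is impossible since $d<r$. Next consider the compositum $E=\K(u,f)$. This field lies between $\K(u)$ and $F$, so Lemma~\ref{lem:no-intermediate} forces $E=\K(u)$ or $E=F$. The case $E=\K(u)$ would mean $f\in\K(u)$, which we have just excluded. Hence $F=\K(u,f)$, and \eqref{eq:CS} gives
\[
g(\X_R)\le (r-1)(d-1)\le (r-1)(r-2)<(r-1)^2,
\]
which contradicts \eqref{eq:rigidity-hyp}. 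Therefore $\gon(\X_R)=r$.

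For uniqueness, let $f$ have degree exactly $r$. If $\K(f)\ne\K(u)$, then $f\notin\K(u)$, because an element of $\K(u)$ of the same index $r$ in $F$ would generate $\K(u)$. Applying Lemma~\ref{lem:no-intermediate} again gives $F=\K(u,f)$, and \eqref{eq:CS} yields $g(\X_R)\le(r-1)^2$, again a contradiction. This is precisely the argument already used in the proof of Lemma~\ref{lem:pencil}. So $\K(f)=\K(u)$, which means $f=\mu(u)$ for some Möbius transformation $\mu$; that is, $f$ and $u$ agree up to an automorphism of $\PP^1$.

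There is no real obstacle in this argument. The only step needing care is the degree bookkeeping that rules out $f\in\K(u)$ when $d<r$, and that reduces to divisibility of field degrees in the tower $\K(f)\subseteq\K(u)\subseteq F$.
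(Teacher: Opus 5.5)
Your proposal is correct and follows essentially the same route as the paper. Lemma~\ref{lem:no-intermediate} forces $\K(u,f)\in\{\K(u),F\}$, the degree-divisibility argument rules out $f\in\K(u)$, and Castelnuovo--Severi \eqref{eq:CS} then contradicts $g(\X_R)>(r-1)^2$, both for $d<r$ and for $d=r$ with $\K(f)\ne\K(u)$; you only additionally spell out that $\K(f)=\K(u)$ means $f$ is a M\"obius transformation of $u$, which the paper leaves implicit.
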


\begin{proof}
The morphism $u$ has degree $r$.  Suppose that $v:\X_R\to\PP^1$ has degree
$d<r$.  If $\K(u,v)\ne F$, Lemma~\ref{lem:no-intermediate} gives
$\K(u,v)=\K(u)$.  But then $v\in\K(u)$ and $d$ is a multiple of $r$,
a contradiction.  Hence $F=\K(u,v)$, and Castelnuovo--Severi yields
\[
 g(\X_R)\le(r-1)(d-1)< (r-1)^2,
\]
a contradiction.  Hence $\gon(\X_R)=r$.

If $v$ has degree $r$ and $\K(v)\ne\K(u)$, then $\K(u,v)=F$ by the same
argument and Castelnuovo--Severi gives $g(\X_R)\le(r-1)^2$.  Therefore
$\K(v)=\K(u)$, which is equivalent to the stated
uniqueness.
\end{proof}

\begin{remark}
Since $\Aut_{\K}(\X_R)=1$, the curve admits no nontrivial separable Galois
covering onto another curve.  Thus the construction gives curves of
unbounded genus with a distinguished low-degree pencil but without geometric
symmetries.
\end{remark}

\section{Maximal additive covers}
\label{sec:maximal-covers}

We now impose arithmetic conditions on the covering curve.  The preceding
results on the automorphisms of $\X_R$ did not require maximality.

The number of rational points of a curve
\[
 y^r-y=xR(x)
\]
over $\F_{q^2}$ can be expressed through
\[
 x\longmapsto\Tr_{\F_{q^2}/\F_r}(xR(x)).
\]
We use the following known criterion for prime-power $r$
\cite[Theorem~3.1]{CakcakOzbudak}; for $r=p$, compare
\cite[Proposition~13.4]{vdGvdV} and
\cite[Proposition~1.2.6]{BouwEtAl}.

\begin{proposition}\label{prop:maximal-rank-two}
Let $m\ge2$, $q=r^m$, and let $R\in\F_{q^2}[X]$ be $r$-linearized of
degree $r^{m-1}$.  If
\[
 x\longmapsto\Tr_{\F_{q^2}/\F_r}(xR(x))
\]
is a product of two independent $\F_r$-linear forms, then the curve
\[
 y^r-y=xR(x)
\]
is $\F_{q^2}$-maximal.
\end{proposition}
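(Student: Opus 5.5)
The plan is to count points with additive characters and reduce to a quadratic form over $\F_r$. For $x\in\F_{q^2}$, the equation $y^r-y=c$ with $c\in\F_{q^2}$ has a solution in $\F_{q^2}$ if and only if $\Tr_{\F_{q^2}/\F_r}(c)=0$, and then it has exactly $r$ solutions. Hence
\[
 \#\W(\F_{q^2})=1+\sum_{x\in\F_{q^2}}\sum_{\lambda\in\F_r}\psi\bigl(\lambda\,Q(x)\bigr),
\]
where $Q(x)=\Tr_{\F_{q^2}/\F_r}(xR(x))$ and $\psi$ is the canonical additive character of $\F_r$. Here the single point at infinity is counted separately: it is the unique pole of $x$, totally ramified because $\deg(xR(x))=r^{m-1}+1$ is prime to $p$.

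First I would check that $Q$ is a quadratic form on the $\F_r$-space $\F_{q^2}$, which has dimension $N=2m$. This holds because $R$ is $r$-linearized, so $xR(x)=\sum_j B_jx^{r^j+1}$ is a sum of products of two $\F_r$-linear functions of $x$.

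By hypothesis $Q=\ell_1\ell_2$ with $\ell_1,\ell_2$ independent linear forms. After a linear change of coordinates, $Q=X_1X_2$ on $\F_r^{N}$. Its number of zeros is
\[
 r^{N-2}(2r-1)=r^{N-1}+(r-1)r^{N-2}.
\]
Substituting this into the count gives
\[
 \#\W(\F_{q^2})=1+r\cdot\#\{Q=0\}=1+r^{N}+(r-1)r^{N-1}=1+q^2+(r-1)r^{2m-1}.
\]

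Finally I compare with the genus. By \eqref{eq:genus-W} with $h=m-1$, we have $g=(r-1)r^{m-1}/2$, so
\[
 2qg=r^m(r-1)r^{m-1}=(r-1)r^{2m-1}.
\]
These agree, so the curve is $\F_{q^2}$-maximal.

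The main obstacle is the bookkeeping. I need to confirm that the rank-two factorization gives the excess $+(r-1)r^{N-1}$ with a positive sign; this is the hyperbolic case, and it is exactly the maximal sign. I also need to confirm that the dimension count $N=2m$ matches the genus. The degree condition $\deg R=r^{m-1}$ is used only to obtain the genus and to ensure there is a single point at infinity.
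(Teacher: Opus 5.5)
Your proof is correct. The count $1+r\cdot r^{2m-2}(2r-1)=q^2+1+(r-1)r^{2m-1}=q^2+1+2qg$ is right. The one detail you leave implicit is routine: the affine model is nonsingular, since $\partial_y\bigl(y^r-y-xR(x)\bigr)=-1$, so affine solutions are exactly the finite rational places.

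The paper gives no argument of its own here; it cites \cite[Theorem~3.1]{CakcakOzbudak}, whose proof is this same quadratic-form point count carried out for forms of arbitrary rank and type. Your version is the self-contained specialization of that argument to the split rank-two case.
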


We apply this criterion to the following family.  Put
\[
 q=r^m,\qquad T=\Tr_{\F_{q^2}/\F_r}.
\]

\begin{lemma}\label{lem:trace-family}
Let $\alpha\in\F_{q^2}^\ast$ and $t\in\F_{q^2}$ satisfy $t^q=-t$.
Define
\begin{equation}\label{eq:R-alpha-t}
 R_{\alpha,t}(X)=\alpha^2tX+
 \sum_{j=1}^{m-1}
 \alpha^{r^j+1}(t^{r^j}+t)X^{r^j}.
\end{equation}
Then
\begin{equation}\label{eq:trace-product}
 T(xR_{\alpha,t}(x))
 =T(\alpha x)T(\alpha t x)
\end{equation}
for every $x\in\F_{q^2}$.
\end{lemma}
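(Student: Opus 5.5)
The plan is to make the substitution $w=\alpha x$, which turns \eqref{eq:trace-product} into an identity about trace forms alone. Since
\[
x\cdot\alpha^2 t x = t w^2,
\qquad
x\cdot\alpha^{r^j+1}(t^{r^j}+t)x^{r^j} = (t^{r^j}+t)\,w^{r^j+1},
\]
the left-hand side becomes
\[
T\bigl(tw^2\bigr)+\sum_{j=1}^{m-1}\Bigl(T\bigl(t^{r^j}w^{r^j+1}\bigr)+T\bigl(t\,w^{r^j+1}\bigr)\Bigr).
\]
The right-hand side becomes $T(w)T(tw)$. Both sides are functions of $w$ and range over the same values as $x$ varies, because $\alpha\ne0$.

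The first step expands the product of traces. Write $T(c)=\sum_{i=0}^{2m-1}c^{r^i}$ and re-index by the difference of the two Frobenius exponents. This gives the standard identity
\[
T(a)T(b)=\sum_{j=0}^{2m-1}T\bigl(a\,b^{r^j}\bigr).
\]
With $a=w$ and $b=tw$ we get
\[
T(w)T(tw)=\sum_{j=0}^{2m-1}A_j,
\qquad
A_j=T\bigl(t^{r^j}w^{r^j+1}\bigr).
\]

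Next I use that $T$ is invariant under the $r$-Frobenius. Raising the argument of $A_j$ to the power $r^{2m-j}$ gives the second expression
\[
A_j=T\bigl(t\,w^{1+r^{2m-j}}\bigr).
\]
I then split the sum into four ranges.
\begin{enumerate}[\upshape (a)]
\item The term $j=0$ is $T(tw^2)$.
\item The terms $1\le j\le m-1$ are $T(t^{r^j}w^{r^j+1})$, used in their original form.
\item The term $j=m$ is $A_m=T(t\,w^{q+1})$. Put $c=t\,w^{q+1}$. Then $c^q=-c$ because $t^q=-t$ and $w^{q^2}=w$. Hence
\[
T(c)=\Tr_{\F_q/\F_r}\bigl(c+c^q\bigr)=0.
\]
This argument is uniform in the characteristic; in characteristic $2$ it reduces to $2\Tr=0$.
\item The terms $j=m+k$ with $1\le k\le m-1$ are rewritten via the second expression as $T(t\,w^{1+r^{m-k}})$. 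As $k$ runs over $1,\dots,m-1$, so does $m-k$, so these give exactly $\sum_{j=1}^{m-1}T(t\,w^{r^j+1})$.
\end{enumerate}
Adding the four ranges reproduces the left-hand side term by term.

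The main obstacle is this bookkeeping of the Frobenius shifts, in particular choosing the right normal form for each range of $j$. The terms with $j>m$ must be converted to have coefficient $t$, not $t^{r^j}$. The middle term $j=m$ must vanish, and this is the only place where the hypothesis $t^q=-t$ is used.
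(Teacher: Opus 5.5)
Your proof is correct and follows the paper's argument essentially verbatim. Both substitute $A=\alpha x$, expand $T(A)T(tA)=\sum_{j=0}^{2m-1}T(t^{r^j}A^{r^j+1})$, pair index $j$ with $2m-j$ via Frobenius invariance of $T$, and kill the middle term $j=m$ using $t^q=-t$; your check that $c=tw^{q+1}$ satisfies $c^q=-c$ just spells out the paper's remark that $A^{q+1}\in\F_q$ and $t^q+t=0$.
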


\begin{proof}
Put $A=\alpha x$.  Expanding the product of traces and using the invariance of
$T$ under Frobenius gives
\[
 T(A)T(tA)=\sum_{j=0}^{2m-1}T(t^{r^j}A^{r^j+1}).
\]
For $1\le j<m$, the terms indexed by $j$ and $2m-j$ combine to
\[
 T((t^{r^j}+t)A^{r^j+1}).
\]
The term with index zero is $T(tA^2)$, and the middle term vanishes because
$A^{q+1}\in\F_q$ and $t^q+t=0$.  This is exactly
\eqref{eq:trace-product}.
\end{proof}

\begin{corollary}\label{cor:maximal-parent}
Assume, in addition, that $t\ne0$ and $t^r+t\ne0$.  Then
$R_{\alpha,t}$ has degree $r^{m-1}$, and
\[
 \W_{\alpha,t}:\qquad y^r-y=xR_{\alpha,t}(x)
\]
is $\F_{q^2}$-maximal.
\end{corollary}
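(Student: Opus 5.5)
The corollary has two parts: a degree statement and a maximality statement. I will check the degree of $R_{\alpha,t}$ directly, and then deduce maximality from Lemma~\ref{lem:trace-family} together with Proposition~\ref{prop:maximal-rank-two}.

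For the degree, the top term of $R_{\alpha,t}$ is indexed by $j=m-1$, with coefficient $\alpha^{r^{m-1}+1}(t^{r^{m-1}}+t)$. Since $\alpha\ne0$, I only need $t^{r^{m-1}}+t\ne0$. Raising to the power $r$ and using $t^{r^m}=t^q=-t$ gives
\[
 (t^{r^{m-1}}+t)^r=t^q+t^r=t^r-t .
\]
This vanishes exactly when $t\in\F_r$. So I must exclude $t\in\F_r$ from $t^q=-t$, $t\ne0$, $t^r+t\ne0$. If $t\in\F_r$, then $t^q=t$, so $2t=0$.
\begin{itemize}
\item In odd characteristic this forces $t=0$, which is excluded.
\item In characteristic two, $t\in\F_r$ gives $t^r+t=0$, which is excluded.
\end{itemize}
Hence the top coefficient is nonzero and $\deg R_{\alpha,t}=r^{m-1}$. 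The coefficient of $X$ is $\alpha^2t\ne0$, so $B_0B_h\ne0$ holds as well.

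For maximality, Lemma~\ref{lem:trace-family} writes the quadratic form as $T(\alpha x)\,T(\alpha t x)$. It then suffices to show that the $\F_r$-linear forms $x\mapsto T(\alpha x)$ and $x\mapsto T(\alpha t x)$ are independent. Both are nonzero, since the trace is surjective and $\alpha,\alpha t\ne0$. By nondegeneracy of the trace pairing, a relation $cT(\alpha x)=T(\alpha t x)$ with $c\in\F_r$ is equivalent to $\alpha t=c\alpha$, that is, $t=c\in\F_r$. This was excluded above. Proposition~\ref{prop:maximal-rank-two} then gives $\F_{q^2}$-maximality.

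The only delicate point is the degree check in characteristic two. There $t^q=-t$ does not by itself exclude $t\in\F_r$, and this is exactly why the hypothesis $t^r+t\ne0$ is needed.
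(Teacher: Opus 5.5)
Your proposal is correct and follows essentially the same route as the paper: both reduce the vanishing of the top coefficient and the dependence of the two trace forms to $t\in\F_r$, which is excluded by $t^q=-t$, $t\ne0$ and $t^r+t\ne0$, and then apply Lemma~\ref{lem:trace-family} and Proposition~\ref{prop:maximal-rank-two}. Your case split by characteristic and your trace-pairing argument only spell out steps that the paper states tersely.
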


\begin{proof}
The two trace forms in \eqref{eq:trace-product} are independent: otherwise
$t\in\F_r$.  Moreover, if the coefficient of $X^{r^{m-1}}$ vanished, then
$t^{r^{m-1}}=-t$; taking $r$th powers and using $t^q=-t$ would give
$t^r=t$, contradicting the hypotheses.  Proposition~\ref{prop:maximal-rank-two}
now applies.
\end{proof}

We also use the fact that an $\F_{q^2}$-quotient of an
$\F_{q^2}$-maximal curve is $\F_{q^2}$-maximal
\cite[Proposition~6]{Lachaud}.

\section{Maximal quotients with trivial automorphism group}
\label{sec:maximal-quotients}

We now apply the geometric construction to the maximal covering curves of the
preceding section.

\begin{proof}[Proof of Theorem~\ref{thm:intro-maximal}]
The coefficients of $R_{\alpha,t}$ satisfy
\[
 B_0=\alpha^2t\ne0,
 \qquad
 B_1=\alpha^{r+1}(t^r+t)\ne0,
\]
and the leading coefficient is nonzero by Corollary~\ref{cor:maximal-parent}.
Hence $D=r-1$.  Proposition~\ref{prop:quotient} gives
\eqref{eq:intro-maximal-curve} and the stated genus.

Since $r>3$ and $m\ge4$,
\[
 D>\gcd(2,D),
 \qquad
 g(\X_{\alpha,t})>(r-1)^2.
\]
Theorem~\ref{thm:rigidity} therefore gives
\[
 \Aut_{\K}(\X_{\alpha,t})=1.
\]
By Corollary~\ref{cor:maximal-parent}, the covering curve is
$\F_{q^2}$-maximal.  The subgroup $C_D$ is defined over $\F_{q^2}$, so its
quotient $\X_{\alpha,t}$ is also $\F_{q^2}$-maximal.
\end{proof}

\begin{remark}\label{rem:pge5}
For $r=p\ge5$ and $q=p^m$, Theorem~\ref{thm:intro-maximal} gives
\[
 g(\X_{\alpha,t})=\frac{q-p}{2p}.
\]
Thus every fixed characteristic $p\ge5$ contains families of
$\F_{q^2}$-maximal curves of unbounded genus with trivial geometric
automorphism group.
\end{remark}

The family of Theorem~\ref{thm:intro-maximal} does not include $p=3$ with
$r=p$.  For the prime-field family below, a polynomial involving only the
 powers $X^{p^j}$ with even $j$ permits the larger scalar group of order $2(p-1)$.
The resulting quotient has trivial automorphism group also in
characteristic three.

\section{Prime-field families}
\label{sec:prime-field}

We finish with explicit equations over $\F_p$.

\subsection{Odd characteristic}

\begin{proof}[Proof of Theorem~\ref{thm:intro-prime-field}(ii)]
Let $q=p^{2n+1}$ and choose $t\in\F_{p^2}^\ast$ with $t^p=-t$.
Then $t^q=-t$.  Taking $r=p$, $m=2n+1$, and $\alpha=1$ in
\eqref{eq:R-alpha-t} gives
\begin{equation}\label{eq:R-even-support}
 R(X)=tX+2t\sum_{i=1}^{n}X^{p^{2i}}.
\end{equation}
Here $t^p+t=0$, so the assumption $t^r+t\ne0$ of
Corollary~\ref{cor:maximal-parent} does not hold.  Nevertheless,
Lemma~\ref{lem:trace-family} gives the product of the two independent
linear forms $T(x)$ and $T(tx)$, and the leading coefficient of $R$ is
$2t\ne0$.  Proposition~\ref{prop:maximal-rank-two} therefore shows directly
that the covering curve is $\F_{q^2}$-maximal.  All nonzero indices of $R$
are even, so we take
\[
 D=2(p-1).
\]
Proposition~\ref{prop:quotient} gives
\[
 uz^p-z=t\left(1+2\sum_{i=1}^{n}
 u^{(p^{2i}-1)/(2(p-1))}\right).
\]
Putting $z=tv$ and using $t^p=-t$ yields \eqref{eq:intro-odd}.

Its genus is $(p^{2n}-1)/4$, and for $n\ge2$ one has
\[
 \frac{p^{2n}-1}{4}>(p-1)^2,
 \qquad
 2(p-1)>2=\gcd(2,2(p-1)).
\]
Theorem~\ref{thm:rigidity} gives the trivial automorphism group.  The scalar
group and the quotient map are defined over $\F_{q^2}$, so the quotient is
maximal.
\end{proof}

In particular, $p=3$ is included.  For $n=2$ one obtains
\[
 uv^3+v=u^2+u^{20}-1,
\]
a maximal curve over $\F_{3^{10}}$ of genus $20$ with trivial geometric
automorphism group.

\subsection{Even characteristic}

\begin{proof}[Proof of Theorem~\ref{thm:intro-prime-field}(i)]
Take
\[
 r=4,
 \qquad
 m=3n,
 \qquad
 q=4^{3n}=2^{6n}.
\]
Choose $t\in\F_{64}^\ast$ of order $9$ and put $\omega=t^3$.  Then
$\omega\in\F_4\setminus\F_2$ and $\omega^2+\omega=1$.  Since
$\F_{64}\subseteq\F_q$, one has $t^q=t=-t$, and $t\notin\F_4$.
Theorem~\ref{thm:intro-maximal}, with $\alpha=1$, gives a quotient curve
with affine coordinates $u_0,z_0$ and equation
\begin{equation}\label{eq:binary-before}
 u_0^2z_0^4+z_0
 =t+\sum_{j=1}^{3n-1}(t^{4^j}+t)u_0^{N_j},
 \qquad
 N_j=\frac{4^j-1}{3}.
\end{equation}
This curve is maximal over $\F_{2^{12n}}$, has genus $2^{6n-3}$, and has
trivial geometric automorphism group.

Now set
\[
 u_0=\omega u,
 \qquad
 z_0=tv.
\]
Since $\omega^2t^4=t$, equation \eqref{eq:binary-before} becomes
\[
 u^2v^4+v
 =1+\sum_{j=1}^{3n-1}
 (t^{4^j-1}+1)\omega^{N_j}u^{N_j}.
\]
Using $t^{4^j-1}=\omega^{N_j}$ and $N_j\equiv j\pmod3$, we obtain
\[
 (t^{4^j-1}+1)\omega^{N_j}
 =\omega^{2j}+\omega^j
 =\begin{cases}
 0,&3\mid j,\\
 1,&3\nmid j.
 \end{cases}
\]
This is precisely \eqref{eq:intro-binary}.
\end{proof}

For $n=2$ the equation is
\[
 u^2v^4+v=1+u+u^5+u^{85}+u^{341}.
\]
It is defined over $\F_2$, is maximal over $\F_{2^{24}}$, has genus $512$,
and has trivial geometric automorphism group.

\begin{remark}
The two parts of Theorem~\ref{thm:intro-prime-field} show that, for every
prime $p$, there is an explicit sequence of curves defined over $\F_p$, of
unbounded genus, whose geometric automorphism group is trivial and which are
maximal over explicitly prescribed finite extensions of $\F_p$.
\end{remark}

\section{Hermitian coverings}
\label{sec:hermitian-coverings}

Let $\mathcal H_q$ be the Hermitian curve over $\F_{q^2}$, given by
$Y^q+Y=X^{q+1}$.  For each maximal curve $\W_R$ constructed above,
\cite[Theorem~6.12]{CakcakOzbudak} gives an $\F_{q^2}$-Galois
covering $\mathcal H_q\to\W_R$.  The covering
$\W_R\to\X_R$ is also defined over $\F_{q^2}$.
Consequently every explicit maximal curve $\X_R$ in this paper is
$\F_{q^2}$-covered by $\mathcal H_q$.

We recall the results of \cite{CakcakOzbudak} needed below,
including the formulas used to construct the covering.

\begin{proposition}\label{prop:CO-covering}
Let $q=r^m$, where $r>2$ is a prime power and $m\ge2$.
Let $R\in\F_{q^2}[X]$ be $r$-linearized of degree $r^h$,
with $1\le h\le m$, and assume that
\[
 \W_R:\qquad y^r-y=xR(x)
\]
is $\F_{q^2}$-maximal.
Then there exists an $\F_{q^2}$-Galois covering
\[
 \pi:\mathcal H_q\longrightarrow\W_R,
 \qquad \deg\pi=r^{2m-h-1}.
\]
It can be chosen so that
\[
 \pi\circ s_a=\rho_a\circ\pi
 \qquad\text{for every }a\in\F_r^\ast,
\]
where
\[
 s_a(X,Y)=(aX,a^2Y),
 \qquad
 \rho_a(x,y)=(ax,a^2y).
\]
\end{proposition}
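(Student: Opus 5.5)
Proposition~\ref{prop:CO-covering} is a recollection of \cite[Theorem~6.12]{CakcakOzbudak}, so the plan is to reconstruct the covering explicitly from the trace description of maximality and then check the equivariance. I would write the Hermitian curve as $Y^q+Y=X^{q+1}$ and use $\F_{q^2}$-maximality of $\W_R$ in the form of Proposition~\ref{prop:maximal-rank-two}. Then the quadratic form $x\mapsto T(xR(x))$, with $T=\Tr_{\F_{q^2}/\F_r}$, has radical $W$ of $\F_r$-codimension two. Equivalently, $xR(x)$ is congruent, modulo the image of $Z\mapsto Z^r-Z$ in $\F_{q^2}[x]$, to a polynomial expression in $X^{q+1}$ and linearized terms.

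The construction I would use is $x=X$ (possibly after a scaling defined over $\F_{q^2}$), together with
\[
 y=\Tr_{\F_{q}/\F_r}(Y)+S(X),
 \qquad \Tr_{\F_q/\F_r}(Y)=\sum_{i=0}^{m-1}Y^{r^i},
\]
where $S$ is a polynomial chosen so that
\[
 y^r-y=-(Y^q+Y)+xR(x)+\ldots
\]
After substituting $Y^q+Y=X^{q+1}$, the identity $y^r-y=XR(X)$ reduces to a polynomial identity of the form
\[
 S(X)^r-S(X)=XR(X)+\lambda X^{q+1}.
\]
This identity is exactly the functional version of the statement that the trace form has the prescribed rank. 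Its solvability is the content of the Çakçak--Özbudak analysis.

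Galois-ness would follow by exhibiting the covering as the quotient of $\mathcal H_q$ by the subgroup of the translation group $\{(X,Y)\mapsto(X+c,Y+c^qX+d)\}$ preserving $x$ and $y$. That subgroup is a $p$-group, and counting gives order $r^{2m-h-1}$: the translations $c$ are constrained by the radical of the quadratic form, and the $d$ are constrained by trace zero.

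For the equivariance, I would check the homogeneity. The map $s_a$ sends $X\mapsto aX$ and $Y\mapsto a^2Y$. Since $a\in\F_r^\ast$, we have $\Tr(a^2Y)=a^2\Tr(Y)$. Each monomial of $S$ has degree $r^i+r^j$, and these scale by $a^2$ because $a^{r^i}=a$. Hence $\pi\circ s_a=\rho_a\circ\pi$.

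The main obstacle is producing $S$, that is, proving the polynomial identity above from maximality, and verifying the degree $r^{2m-h-1}$. Here I would simply invoke \cite[Theorem~6.12]{CakcakOzbudak} and its explicit formulas rather than reprove them. For the $s_a$-compatibility, I would note that $s_a$ normalizes the Galois group of $\pi$, since it normalizes the translation group while preserving the radical $W$ (an $\F_r$-space).
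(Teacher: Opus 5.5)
\textbf{There is a genuine gap: the shape you propose for the covering is wrong, and your equivariance check rests on that shape.}

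\emph{The ansatz $x=X$ fails.} Taking $x=X$ (up to an $\F_{q^2}$-scaling) is incompatible with $\deg\pi=r^{2m-h-1}$ unless $h=m$. If $\K(x)=\K(X)$, then $[\K(\mathcal H_q):\K(\W_R)]=q/r=r^{m-1}$. Concretely, the identity $S(X)^r-S(X)=XR(X)+\lambda X^{q+1}$ has no solution $S\in\K(X)$ when $h<m$. This includes $h=m-1$, the case actually used in the paper. Since the right-hand side is a polynomial, $S$ has no finite poles, so $S$ is a polynomial and $\deg(S^r-S)$ is divisible by $r$. The right-hand side, however, has degree $q+1$, or $r^h+1$ if $\lambda=0$.

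\emph{The $y$-coordinate is also wrong.} For $y=\Tr_{\F_q/\F_r}(Y)+S(X)$ one gets $y^r-y=Y^q-Y+S^r-S$, not $-(Y^q+Y)+\cdots$. In odd characteristic $Y^q-Y=X^{q+1}-2Y\notin\K(X)$. You need the twist $s=-\sum_{j=0}^{m-1}(bY)^{r^j}$ with $b^q=-b$, which gives $s^r-s=bX^{q+1}$.

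\emph{The Galois group count contradicts $x=X$.} Translations $(X,Y)\mapsto(X+c,Y+c^qX+d)$ preserving $X$ have $c=0$. So your group would lie in $\{(X,Y)\mapsto(X,Y+d):d^q+d=0\}$, which has order $q$. But $q<r^{2m-h-1}$ when $h\le m-2$, and for $h=m-1$ you would need this whole group, whose quotient is rational.

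\emph{What the paper uses instead.} It extracts the following construction from \cite{CakcakOzbudak}. First comes $(X,Y)\mapsto(t,s)=(X,-\sum_{j=0}^{m-1}(bY)^{r^j})$ onto $s^r-s=bt^{q+1}$, of degree $r^{m-1}$. This is followed by $m-h$ maps of degree $r$ of the form $(t,s)\mapsto(t^r+ct,\,s+tL(t))$, with $L$ an $r$-linearized polynomial. Thus $x$ is an $r$-linearized polynomial of degree $r^{m-h}$ in $X$, not $X$ itself. The missing factor $r^{m-h}$ in the degree comes from translations in $X$ by its kernel.

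\emph{Your homogeneity argument survives once the shape is fixed.} For $a\in\F_r^\ast$ one has $(at)^r+c(at)=a(t^r+ct)$, $(at)L(at)=a^2tL(t)$, and $\sum_j(ba^2Y)^{r^j}=a^2\sum_j(bY)^{r^j}$. So with the correct formulas your check goes through essentially verbatim.

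\emph{Your closing remark does not fill the gap.} Knowing that $s_a$ normalizes the Galois group of $\pi$ only shows that $s_a$ descends to \emph{some} automorphism of $\W_R$. Identifying that automorphism with $\rho_a$ still requires the explicit formulas.
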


\begin{proof}
The existence and degree of the Galois covering follow from
\cite[Theorem~6.12]{CakcakOzbudak}.
We recall its construction to verify the last assertion.

By \cite[Proposition~5.1 and Theorem~5.17]{CakcakOzbudak},
$\W_R$ is covered by a curve
\[
 z^r-z=b t^{q+1},
 \qquad b\in\F_{q^2}^\ast,\qquad b^q=-b.
\]
The covering is a composition of $m-h$ maps of degree $r$,
each having the form
\[
 (t,s)\longmapsto
 \bigl(t^r+ct,\ s+tL(t)\bigr),
\]
where $c\in\F_{q^2}^\ast$ and
$L\in\F_{q^2}[X]$ is $r$-linearized.
By \cite[Proposition~6.5]{CakcakOzbudak}, the initial map
from $\mathcal H_q$ is
\[
 (X,Y)\longmapsto
 \left(X,-\sum_{j=0}^{m-1}(bY)^{r^j}\right).
\]

For $a\in\F_r^\ast$, we have
\[
 (at)^r+c(at)=a(t^r+ct),
 \qquad
 a^2s+(at)L(at)=a^2\bigl(s+tL(t)\bigr).
\]
Moreover,
\[
 -\sum_{j=0}^{m-1}(ba^2Y)^{r^j}
 =-a^2\sum_{j=0}^{m-1}(bY)^{r^j}.
\]
Thus every map in the composition respects the substitutions
$(t,s)\mapsto(at,a^2s)$, proving
$\pi\circ s_a=\rho_a\circ\pi$.
\end{proof}

\begin{proposition}\label{prop:galois-D-rminus1}
If $\W_R$ is $\F_{q^2}$-maximal and $D=r-1$, then
$\X_R$ is an $\F_{q^2}$-Galois subcover of $\mathcal H_q$.
\end{proposition}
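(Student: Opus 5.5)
The plan is to realize $\X_R$ as $\mathcal H_q/G$ for an explicit group $G$, by lifting $C_D$ to the Hermitian curve. Here we need $\W_R$ to have the shape required by Proposition~\ref{prop:CO-covering}. By Proposition~\ref{prop:maximal-rank-two} the maximal curves in question have $R$ of degree $r^{h}$ with $h=m-1$, so $1\le h\le m$ holds. Proposition~\ref{prop:CO-covering} then gives an $\F_{q^2}$-Galois covering $\pi:\mathcal H_q\to\W_R$. Let $N\le\Aut(\mathcal H_q)$ be its Galois group; it is defined over $\F_{q^2}$, and $\W_R\cong\mathcal H_q/N$.

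Since $D=r-1$, we have $C_D=\{\rho_a:a\in\F_r^\ast\}$. Set $S=\{s_a:a\in\F_r^\ast\}\le\Aut(\mathcal H_q)$. This is a group of order $r-1$, because $s_a$ preserves $Y^q+Y=X^{q+1}$: indeed $a^{q+1}=a^2$ and $(a^2)^q=a^2$ for $a\in\F_r\subseteq\F_q$. The equivariance $\pi\circ s_a=\rho_a\circ\pi$ has two consequences.
\begin{enumerate}[\upshape (i)]
\item $S$ normalizes $N$. For $n\in N$ we have $\pi\circ s_a n s_a^{-1}=\rho_a\pi n s_a^{-1}=\rho_a\pi s_a^{-1}=\rho_a\rho_a^{-1}\pi=\pi$, so $s_a n s_a^{-1}$ lies in the deck group $N$ (the covering is Galois).
\item $G=NS$ is therefore a subgroup, and the composite $\mathcal H_q\to\W_R\to\W_R/C_D=\X_R$ is invariant under $G$.
\end{enumerate}

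It remains to compare degrees. The composite has degree $|N|(r-1)$. Moreover $S\cap N=1$: if $s_a\in N$ then $\rho_a\pi=\pi$, which forces $\rho_a=\mathrm{id}$ and hence $a=1$. So $|G|=|N|(r-1)$, the function field of $\X_R$ pulled back to $\K(\mathcal H_q)$ equals the fixed field of $G$, and $\X_R\cong\mathcal H_q/G$. Over $\F_{q^2}$ the same holds, since all of $N$, $S$ and $\pi$ are defined over $\F_{q^2}$.

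The main obstacle is that Proposition~\ref{prop:CO-covering} as stated only asserts that $\pi$ is Galois and equivariant. We must make sure that "Galois" gives a deck group $N$ acting by $\K$-automorphisms of $\mathcal H_q$, so that the normalizing argument in (i) applies. This is the standard meaning of a Galois covering. We also need to check the hypotheses $r>2$ and $m\ge2$, which hold in our families.
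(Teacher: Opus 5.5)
Your proposal is correct and is essentially the paper's proof: the equivariance $\pi\circ s_a=\rho_a\circ\pi$ makes the $s_a$ normalize the deck group of $\pi$, and the group they generate together with that deck group has fixed field $\F_{q^2}(\X_R)$. Your check that $S\cap N=1$ and the degree count $|G|=|N|(r-1)$ only make explicit what the paper states directly.

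One small correction concerns the hypothesis $1\le h\le m$ of Proposition~\ref{prop:CO-covering}. Proposition~\ref{prop:maximal-rank-two} is only a sufficient criterion for maximality, so it does not show that $h=m-1$ for an arbitrary maximal $\W_R$. The hypothesis holds anyway: the standard point count shows that maximality forces the quadratic form $x\mapsto\Tr_{\F_{q^2}/\F_r}(xR(x))$ to have rank $2(m-h)\ge0$.
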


\begin{proof}
Let $\pi:\mathcal H_q\to\W_R$ be the Galois covering given by
Proposition~\ref{prop:CO-covering}, and let $C$ be its Galois group.
For every $a\in\F_r^\ast$, the relation
$\pi\circ s_a=\rho_a\circ\pi$ implies that $s_a$ normalizes $C$.

Since $D=r-1$, the automorphisms $\rho_a$, with
$a\in\F_r^\ast$, form precisely the scalar group $C_D$.
Hence the subgroup
\[
 G=\langle C,s_a:a\in\F_r^\ast\rangle
 \le\Aut_{\F_{q^2}}(\mathcal H_q)
\]
has fixed field
\[
 \F_{q^2}(\mathcal H_q)^G
 =\F_{q^2}(\W_R)^{C_D}
 =\F_{q^2}(\X_R).
\]
Therefore $\X_R$ is an $\F_{q^2}$-Galois subcover of
$\mathcal H_q$.
\end{proof}

When $D=2(r-1)$, the conclusion of the preceding proposition
need not hold. We show this for the prime-field curves $\X_{p,n}$
in every odd characteristic and for every $n\ge2$.

Let $p$ be an odd prime, let $m=2n+1\ge5$, and put
$q=p^m$. For $1\ne\sigma\in\mathrm{PGU}(3,q)$, let
\[
 i(\sigma)=\sum_{P\in\mathcal H_q}
 v_P\bigl(\sigma(t_P)-t_P\bigr),
\]
where $t_P$ is a local parameter at $P$. Thus, for
$G\le\mathrm{PGU}(3,q)$, the degree of the different of
$\mathcal H_q\to\mathcal H_q/G$ is
$\sum_{1\ne\sigma\in G}i(\sigma)$.

\begin{lemma}\label{lem:uniform-subspaces}
Let $V\subseteq\F_{q^2}$ be an $\F_p$-subspace which is stable under
multiplication by some $\lambda\in\F_{q^2}\setminus\F_q$, and put
\[
 V_0=\left\langle u^qv-uv^q:u,v\in V\right\rangle_{\F_p}.
\]
Then
\[
 \dim_{\F_p}V\le2\dim_{\F_p}V_0.
\]
\end{lemma}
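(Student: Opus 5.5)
The proof reduces to a kernel computation for a single linear map. We may assume $V\ne0$, since otherwise there is nothing to prove. Fix a nonzero $v\in V$ and consider the map
\[
 \varphi_v:V\longrightarrow\F_{q^2},\qquad u\longmapsto u^qv-uv^q .
\]
Since $u\mapsto u^q$ is additive and fixes $\F_p$, the map $\varphi_v$ is $\F_p$-linear. By the definition of $V_0$, its image lies in $V_0$. Rank--nullity then gives
\[
 \dim_{\F_p}V-\dim_{\F_p}\ker\varphi_v\le\dim_{\F_p}V_0 .
\]
So it suffices to show that $\dim_{\F_p}\ker\varphi_v\le\tfrac12\dim_{\F_p}V$.

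Next we identify the kernel. For $u\in V$, the condition $u^qv=uv^q$ is equivalent, after dividing by $v^{q+1}\ne0$, to $(u/v)^q=u/v$, that is, $u/v\in\F_q$. Hence
\[
 \ker\varphi_v=V\cap v\F_q .
\]
Put $W=v^{-1}V$. Multiplication by $v^{-1}$ is an $\F_p$-linear bijection, so $W$ is again an $\F_p$-subspace with $\dim_{\F_p} W=\dim_{\F_p} V$. Since multiplication commutes, $W$ is still stable under multiplication by $\lambda$. The kernel corresponds to $W_1=W\cap\F_q$, so $\dim_{\F_p}\ker\varphi_v=\dim_{\F_p}W_1$.

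The key step is to compare $W_1$ with its multiple $\lambda W_1$, both of which lie in $W$ because $W$ is stable under multiplication by $\lambda$. We claim that $W_1\cap\lambda W_1=0$. Indeed, suppose $x=\lambda y$ with $x,y\in W_1\subseteq\F_q$ and $x\ne0$. Then $y\ne0$, and $\lambda=x/y\in\F_q$, contradicting $\lambda\notin\F_q$. Therefore $W_1\oplus\lambda W_1\subseteq W$. Since multiplication by $\lambda\neq 0$ is injective, $\dim_{\F_p}\lambda W_1=\dim_{\F_p}W_1$, and so
\[
 2\dim_{\F_p}W_1\le\dim_{\F_p}W=\dim_{\F_p}V .
\]

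Combining this with the rank--nullity inequality gives
\[
 \dim_{\F_p}V_0\ge\dim_{\F_p}V-\tfrac12\dim_{\F_p}V=\tfrac12\dim_{\F_p}V,
\]
which is the claim. The only point needing care is the kernel computation, in particular the division by $v$. It is harmless because $v\neq0$, and the lemma requires only one nonzero $v\in V$.
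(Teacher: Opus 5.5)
Your proof is correct, and it reaches the inequality by a different route from the paper.

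The paper also fixes one nonzero $u\in V$, but it applies the $\lambda$-stability on the $V_0$ side instead of bounding a kernel. Since $\lambda v\in V$, both $u^qv-uv^q$ and $\lambda u^qv-\lambda^quv^q=u^q(\lambda v)-u(\lambda v)^q$ lie in $V_0$. The combination $\lambda^q(u^qv-uv^q)-(\lambda u^qv-\lambda^quv^q)=(\lambda^q-\lambda)u^qv$ then gives $u^qV\subseteq(\lambda-\lambda^q)^{-1}(V_0+\lambda^qV_0)$. Injectivity of multiplication by $u^q$ yields $\dim_{\F_p}V\le\dim_{\F_p}(V_0+\lambda^qV_0)\le2\dim_{\F_p}V_0$.

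So the paper doubles $V_0$ and embeds $V$ into the double. You instead map $V$ into $V_0$ by $\varphi_v$, and use the $\lambda$-stability to show that the kernel $V\cap v\F_q$ and its multiple $\lambda(V\cap v\F_q)$ are independent inside $V$. Both arguments use $\lambda\notin\F_q$ in the same essential way: the paper needs $\lambda-\lambda^q$ invertible, you need $W_1\cap\lambda W_1=0$.

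The paper's version is slightly shorter, since it never has to identify the kernel. Yours gives a bit more information. For every nonzero $v$, the single linear map $\varphi_v$ already has rank at least $\tfrac12\dim_{\F_p}V$. Moreover, the sharper bound $\dim_{\F_p}V_0\ge\dim_{\F_p}V-\dim_{\F_p}(V\cap v\F_q)$ shows when equality can hold. Equality $\dim_{\F_p}V=2\dim_{\F_p}V_0$ forces $\dim_{\F_p}(V\cap v\F_q)=\tfrac12\dim_{\F_p}V$ and $V_0=\varphi_v(V)$ for every nonzero $v\in V$.
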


\begin{proof}
If \(V=\{0\}\), the assertion is immediate. Assume \(V\ne\{0\}\), and fix \(u\in V\setminus\{0\}\). For $v\in V$, both
\[
 u^qv-uv^q
 \quad\text{and}\quad
 \lambda u^qv-\lambda^quv^q
\]
belong to $V_0$. Their difference after multiplication of the first
one by $\lambda^q$ is $(\lambda-\lambda^q)u^qv$. Hence
\[
 u^qV\subseteq(\lambda-\lambda^q)^{-1}(V_0+\lambda^qV_0).
\]
Multiplication by $u^q$ is injective, and the assertion follows.
\end{proof}

\begin{lemma}\label{lem:uniform-three-points}
If $\sigma\in\mathrm{PGU}(3,q)$ permutes cyclically three
noncollinear points of $\PP^2(\K)$, then $\sigma$ has order $3$ and
is neither a homology nor an elation.
\end{lemma}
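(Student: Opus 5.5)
Throughout, I use that $\sigma\in\mathrm{PGU}(3,q)\le\mathrm{PGL}(3,\K)$ is a collineation of $\PP^2(\K)$. Let $P_1,P_2,P_3$ be the three noncollinear points with $\sigma(P_1)=P_2$, $\sigma(P_2)=P_3$, $\sigma(P_3)=P_1$.

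\emph{Step 1: the order is $3$.} Since $P_1,P_2,P_3$ are noncollinear, they can be taken as the coordinate points. A matrix representing $\sigma$ is then monomial:
\[
 \sigma(X_0,X_1,X_2)=(cX_2,\ aX_0,\ bX_1),\qquad abc\ne0.
\]
The matrix $M$ of $\sigma$ satisfies $M^3=abc\,I$, which is a scalar matrix. Hence $\sigma^3=1$ in $\mathrm{PGL}(3,\K)$. Moreover $\sigma\ne1$, because it moves $P_1$. So $\sigma$ has order exactly $3$.

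\emph{Step 2: $\sigma$ is neither a homology nor an elation.} Both homologies and elations are central collineations: they fix pointwise an axis $\ell$, which is a line. I show that $\sigma$ cannot fix a line pointwise.

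Suppose it did. A line meets the triangle $P_1P_2P_3$, and I treat two cases.

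\begin{enumerate}[\upshape (a)]
\item If $\ell$ contains some $P_i$, then $\sigma(P_i)=P_i$. This contradicts the fact that the $P_i$ are permuted cyclically with no fixed point.
\item If $\ell$ contains none of the $P_i$, it meets the side $P_1P_2$ in a point $Q\ne P_1,P_2$. Since $Q\in\ell$, it is fixed, so $Q=\sigma(Q)\in\sigma(P_1P_2)=P_2P_3$. But the lines $P_1P_2$ and $P_2P_3$ are distinct, by noncollinearity, and meet only in $P_2$. Hence $Q=P_2$, a contradiction.
\end{enumerate}

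A direct computation gives an alternative check. The characteristic polynomial of $M$ is $T^3-abc$. If $p\ne3$, this has three distinct roots, so $M$ is diagonalizable with distinct eigenvalues; no eigenspace has dimension $2$, and therefore no line is fixed pointwise. If $p=3$, then $M$ has a single eigenvalue $\mu$ with $\mu^3=abc$, and $M-\mu I$ has rank $2$ (the $2\times2$ minor in rows $1,2$ and columns $1,3$ is $\pm c\mu\neq0$). So the fixed-point set of $\sigma$ is one point, not a line. In that case $\sigma$ is a unipotent element of order $3$ that is not an elation.

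Either argument gives the claim. The only mild point to handle is the case $p=3$, where the order-$3$ element is unipotent. The geometric argument of Step 2 covers it uniformly, so I would present that argument and give the eigenvalue computation only as a remark.
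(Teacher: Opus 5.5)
Your proof is correct. Step~1 is the paper's argument: a monomial lift whose cube is the scalar matrix $abc\,I$. You also make explicit that $\sigma\ne1$, which the paper leaves implicit.

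For the second assertion you take a different route. The paper stays with the lift $M$. Its minimal polynomial is $T^3-abc$, of degree $3$, since $I,M,M^2$ have disjoint supports and are therefore linearly independent. By contrast, every lift of a homology or an elation is annihilated by a quadratic, $(T-\alpha)(T-\beta)$ or $(T-\alpha)^2$. Your argument is instead incidence-geometric. An axis through a vertex would fix that vertex. An axis avoiding the vertices meets $P_1P_2$ in a fixed point, which is then forced into $P_1P_2\cap P_2P_3=\{P_2\}$.

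Your version is coordinate-free and works for collineations of any projective plane. The paper's version is a one-line addition to the matrix already written down in Step~1. Both are uniform in the characteristic. Only your eigenvalue remark needs the split $p\ne3$ versus $p=3$; the minimal-polynomial formulation avoids it, and in characteristic $3$ it says directly that $M$ is a scalar times a single Jordan block.

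There is a small slip in that remark. The minor of $M-\mu I$ in rows $1,2$ and columns $1,3$ is $-ac$, not $\pm c\mu$; the latter is the minor in columns $2,3$. Both are nonzero, so the rank-$2$ conclusion stands.
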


\begin{proof}
In the basis determined by the three points, a lift of $\sigma$ is a
monomial matrix with nonzero entries $c_1,c_2,c_3$. Its cube is the
scalar matrix $c_1c_2c_3I$, so $\sigma$ has order $3$. The minimal
polynomial of the lift is $T^3-c_1c_2c_3$ and has degree $3$, whereas,
up to multiplication by a scalar, the minimal polynomial of a homology
or an elation has degree $2$.
\end{proof}

\begin{proof}[Proof of Theorem~\ref{thm:uniform-nongalois}]
Suppose that $\X_{p,n}\simeq\mathcal H_q/G$ for some
$G\le\Gamma:=\mathrm{PGU}(3,q)$. Recall that
\[
 g(\X_{p,n})=\frac{p^{m-1}-1}{4}=\frac{q-p}{4p},
 \qquad
 \Aut_{\K}(\X_{p,n})=1.
\]

\medskip
\noindent\emph{Numerical and normalizer reductions.}
The action of $N_\Gamma(G)$ on $\K(\mathcal H_q)^G$ has kernel $G$.
Therefore
\[
 N_\Gamma(G)/G\hookrightarrow\Aut_{\K}(\X_{p,n}),
\]
and hence
\begin{equation}\label{eq:uniform-self}
 N_\Gamma(G)=G.
\end{equation}
If $C\le G$ and $\mathcal H_q/C$ is rational, then
$\mathcal H_q/G$ is rational by L\"uroth's theorem, which is impossible.

Riemann--Hurwitz and Hilbert's different formula give
\begin{equation}\label{eq:uniform-RH}
 (q+1)(q-2)
 =\frac{q-5p}{2p}|G|+
 \sum_{1\ne\sigma\in G}i(\sigma).
\end{equation}
For every nonidentity automorphism of $\mathcal H_q$, one has
$0\le i(\sigma)\le q+2$. It follows that
\[
 \frac{2pq^2}{(2p+1)q-p}
 \le |G|\le
 \frac{2p(q+1)(q-2)}{q-5p}.
\]
Since $q\ge p^5$, we may use the simpler bounds
\begin{equation}\label{eq:uniform-size}
 \frac{6q}{7}<|G|<3pq.
\end{equation}

Suppose that $G$ fixes a point
$P\in\PP^2(\F_{q^2})\setminus\mathcal H_q$. The central group
$C_{q+1}$ of homologies with center $P$ lies in $N_\Gamma(G)=G$.
Since $\mathcal H_q/C_{q+1}$ is rational, this is impossible. Thus
\begin{equation}\label{eq:uniform-noexternal}
 G\text{ fixes no point of }
 \PP^2(\F_{q^2})\setminus\mathcal H_q.
\end{equation}

\medskip
\noindent\emph{Subgroups fixing a point of $\mathcal H_q$.}
Assume that $G$ fixes an $\F_{q^2}$-rational point of
$\mathcal H_q$. After conjugation, this point is the point at infinity
of the model $Y^q+Y=X^{q+1}$. The elements of its stabilizer have the
form
\[
 [\lambda,b,c]:(X,Y)\longmapsto
 (\lambda X+b,\lambda^{q+1}Y+\lambda b^qX+c),
 \qquad c^q+c=b^{q+1}.
\]
Put
\[
 Q=\{[1,b,c]\},\qquad
 Z=\{[1,0,c]\},
\]
so that $Z$ is the center of $Q$ and $|Z|=q$. Set
\[
 \begin{aligned}
 G_1&=\{\lambda:[\lambda,b,c]\in G\},\\
 G_2&=\{b:[1,b,c]\in G\text{ for some }c\},\\
 G_3&=G\cap Z,
 \end{aligned}
\]
and write
\[
 |G_1|=A,\qquad |G_2|=p^j,\qquad |G_3|=p^k,
 \qquad d=\gcd(A,q+1).
\]
Then $|G|=Ap^{j+k}$ and $A\mid q^2-1$.

We first compute the degree of the different. By
\cite[Equation 2.12]{GSX}, if
$\sigma=[\lambda,b,c]\ne1$, then
\[
 i(\sigma)=
 \begin{cases}
 q+2,&\lambda=1, b=0,\\
 2,&\lambda=1, b\ne0.
 \end{cases}
\]
If $\lambda\ne1$, then $\sigma$ is conjugate by an element of $Q$ to
$[\lambda,0,c']$, where $c'^q+c'=0$. When
$\lambda^{q+1}\ne1$, it is conjugate to
$\operatorname{diag}(\lambda,\lambda^{q+1},1)$ and contributes $2$.
When $\lambda^{q+1}=1$, it is either a homology, contributing $q+1$,
or an element whose order is divisible by $p$ and greater than $p$,
contributing $1$.

The elements of $G$ satisfying $\lambda^{q+1}=1$ form a subgroup
with normal Sylow $p$-subgroup $G\cap Q$ and cyclic quotient of order
$d$. For each nonidentity value of $\lambda$ in this quotient, the
corresponding coset contains $p^j$ homologies and
$p^{j+k}-p^j$ elements contributing $1$. Indeed, by
Schur--Zassenhaus the complements are conjugate under $G\cap Q$, and
the centralizer in $G\cap Q$ of $[\lambda,0,0]$ is $G\cap Z$.
Consequently
\[
 \sum_{1\ne\sigma\in G}i(\sigma)
 =q\bigl(p^k-1+(d-1)p^j\bigr)-2
  +p^{j+k}(2A-d+1).
\]
Substitution in \eqref{eq:uniform-RH} gives
\begin{equation}\label{eq:uniform-point}
 2p(q-p^k)\bigl(q-(d-1)p^j\bigr)
 =Ap^{j+k}(q-p).
\end{equation}

Let $\lambda$ generate $G_1$. Conjugation by an element with scalar
part $\lambda$ shows that $G_2$ is stable under multiplication by
$\lambda$, while $G_3$ is stable under multiplication by
$\lambda^{q+1}$. Moreover,
\[
 [1,b,c][1,b',c'][1,b,c]^{-1}[1,b',c']^{-1}
 =[1,0,b^qb'-bb'^q].
\]
Hence
\begin{equation}\label{eq:uniform-commutator}
 \left\langle u^qv-uv^q:u,v\in G_2\right\rangle_{\F_p}
 \subseteq G_3.
\end{equation}

We have $k<m$, since $k=m$ would imply $Z\le G$, whereas
$\mathcal H_q/Z$ is rational. We also have $j>0$. Indeed, if $j=0$,
the action of $G$ on the tangent line at the fixed point factors
through the cyclic scalar quotient. If this quotient is trivial, the
line is fixed pointwise. Otherwise, in the coordinate $Y/X$, a
generator has the form
\[
 Y/X\longmapsto\lambda^qY/X+b^q
\]
and fixes an $\F_{q^2}$-rational point of the tangent line different
from the point of tangency. In both cases this contradicts
\eqref{eq:uniform-noexternal}.

We next show that $k>0$. If $k=0$, reduction of
\eqref{eq:uniform-point} modulo $q-1$ gives
\[
 q-1\mid A(p-1).
\]
Put $B=(q-1)/(p-1)$. Then $B\mid A$ and
$\operatorname{ord}_B(p)=m$. Since $G_2$ is stable under a generator
of $G_1$, it is a vector space over
$\F_p(\lambda)=\F_{p^{\operatorname{ord}_A(p)}}$. Therefore
\[
 m=\operatorname{ord}_B(p)\mid\operatorname{ord}_A(p)\mid j.
\]
It follows that
\[
 |G|=Ap^j\ge Bp^m
 =q\frac{q-1}{p-1}>3pq,
\]
contrary to \eqref{eq:uniform-size}.

If $d=1$, comparison of the $p$-adic valuations in
\eqref{eq:uniform-point} gives $j=m$, and cancellation yields
\[
 2(p^{m-k}-1)=A(p^{m-1}-1).
\]
The only possible equality is $k=1$ and $A=2$, but then
$d=\gcd(2,q+1)=2$. Hence $d>1$.

The second factor on the left-hand side of
\eqref{eq:uniform-point} is positive, and therefore $j<m$.
The action of the scalar group on $G_2$ and $G_3$ gives
\begin{equation}\label{eq:uniform-orders}
 A\mid p^j-1,\qquad A/d\mid p^k-1.
\end{equation}
The equality $A=d$ is impossible: in this case the scalar elements
have norm $1$, so the full group $Z$ normalizes $G$; by
\eqref{eq:uniform-self}, this would give $Z\le G$. Thus $A>d>1$.
Dividing \eqref{eq:uniform-point} by $p^{j+k+1}$ gives
\begin{equation}\label{eq:uniform-point-reduced}
 2(p^{m-k}-1)(p^{m-j}-d+1)=A(p^{m-1}-1).
\end{equation}

If $d=2$, put $a=m-k$ and $b=m-j$. Then
\begin{equation}\label{eq:uniform-d2}
 2(p^a-1)(p^b-1)=A(p^{m-1}-1),
 \qquad 1\le a,b\le m-1.
\end{equation}
By the Bang--Zsigmondy theorem \cite{Zsigmondy}, the integer
$p^{m-1}-1$ has a prime divisor whose multiplicative order for $p$
is $m-1$: here $m-1\ge4$ and $p$ is odd, so there is no
exceptional case. This prime divides $p^a-1$ or $p^b-1$, and hence
$a=m-1$ or $b=m-1$. If $a=m-1$, then $k=1$ and
$A=2(p^b-1)$. By \eqref{eq:uniform-orders}, $p^b-1\mid p-1$,
so $b=1$, $j=m-1$, and $A=2(p-1)$. Since $m$ is odd,
$(q-1)/(p-1)$ is odd, and hence $A\nmid q-1$. Thus the scalar
generator $\lambda$ lies outside $\F_q$. By
Lemma~\ref{lem:uniform-subspaces} and \eqref{eq:uniform-commutator},
$j\le2k=2$, contrary to $j=m-1\ge4$. If $b=m-1$, then $j=1$
and $A=2(p^a-1)\ge2(p-1)$, contrary to $A\mid p^j-1$.

Suppose now that $d>2$, and put $g=\gcd(m,j)$. Since
$d\mid p^m+1$ and $d\mid p^j-1$, the integer $j/g$ is even,
and
\begin{equation}\label{eq:uniform-gcd}
 d\mid p^g+1,\qquad A/d\mid p^g-1.
\end{equation}
First assume $g=1$. Then $A\mid p^2-1$, $d\mid p+1$, and
$A/d\mid p-1$. If $j,k\le m-2$ and $p\ge5$, reduction of
\eqref{eq:uniform-point-reduced} modulo $p^2$ gives
\[
 A+2d\equiv2\pmod{p^2}.
\]
Since $A+2d<3p^2$, write $A+2d=2+\ell p^2$ with
$\ell\in\{1,2\}$. As $d\mid A$ and $d\mid p^2-1$, we get
$d\mid\ell+2$ and hence $d\le4$. Therefore
\[
 A+2d\le4(p-1)+8=4p+4<p^2+2,
\]
a contradiction. For $p=3$, the conditions $A>d$ and
$A/d\mid p-1$ imply $A/d=2$. Since $d\mid4$ and
$d=\gcd(A,q+1)=\gcd(A,4)$, one has $d=4$ and $A=8$.
Equation \eqref{eq:uniform-point-reduced} becomes
\[
 2(3^{m-k}-1)(3^{m-j}-3)=8(3^{m-1}-1),
\]
whose sides have $3$-adic valuations $1$ and $0$, respectively.
Thus $j=m-1$ or $k=m-1$.

If $k=m-1$ and $j\ge2$, equation
\eqref{eq:uniform-point-reduced} gives $A<2$. If $k=m-1$
and $j=1$, the $\F_p$-space $G_2$ is stable under multiplication by
$\lambda$, so $A\mid p-1$, whereas
\[
 A=2(p-1)\frac{p^{m-1}-d+1}{p^{m-1}-1}>p-1.
\]
Thus $j=m-1$. If $k\ge2$, equation
\eqref{eq:uniform-point-reduced} gives $A<3$. If $k=1$, then
$\lambda\notin\F_q$, since otherwise $d\le2$; the same subspace
lemma gives $m-1=j\le2k=2$. This excludes $g=1$.

Finally, assume $g>1$. Since $m$ is odd, $g\ge3$ and $j\ge2g$.
Moreover $\lambda\notin\F_q$, because $d>2$. If $k=1$, then
Lemma~\ref{lem:uniform-subspaces} and
\eqref{eq:uniform-commutator} give $j\le2$, a contradiction.
Suppose $k\ge2$, and put
\[
 h=\gcd(m-1,k-1),\qquad m-1=vh,\qquad
 M=\frac{p^{m-1}-1}{p^h-1},\qquad b=m-j.
\]
Here $v\ge2$. Since
$\gcd(p^{m-1}-1,p^{m-k}-1)=p^h-1$,
equation \eqref{eq:uniform-point-reduced} implies
\begin{equation}\label{eq:uniform-divisibility}
 M\mid2(p^b+1-d),\qquad p^b+1-d>0.
\end{equation}
If $j>h+1$, then $b<(v-1)h$, and hence
$0<2(p^b+1-d)<2p^b<M$, a contradiction. Thus
$c=h+1-j\ge0$ and $b=(v-1)h+c$. Write
$M=p^{(v-1)h}+S$, where
$0<S<\tfrac32p^{(v-2)h}$. Then
\[
 2(p^b+1-d)=2p^cM-2(p^cS+d-1).
\]
By \eqref{eq:uniform-gcd}, $d-1\le p^g$; also
$h\ge j-1\ge2g-1$. Consequently
\[
 0<\frac{2(p^cS+d-1)}{M}
 <3p^{1-j}+2p^{g-h}
 \le\frac3{3^5}+\frac2{3^2}<1,
\]
contrary to \eqref{eq:uniform-divisibility}. This excludes all
subgroups fixing a point of $\mathcal H_q$.

\medskip
\noindent\emph{Self-polar triangles.}
Suppose that $G$ stabilizes a self-polar triangle. Its image in the
permutation group of the three vertices cannot have order $1$ or $2$,
since in that case $G$ would fix a vertex, contrary to
\eqref{eq:uniform-noexternal}. Thus its image has order $3$ or $6$.

Assume first that the image has order $3$, and put
\[
 e=|G\cap(C_{q+1}\times C_{q+1})|.
\]
For $p=3$, a $3$-cycle acts on $C_{q+1}^2$ without nonidentity
fixed points, since the determinant of $1-s$ on this group is $3$
and $3\nmid q+1$. Hence the elements outside the diagonal subgroup
are regular unipotents and contribute $2$ to the different.
By \cite[Proposition~3.5]{DVMZ}, the genus equation becomes
\begin{equation}\label{eq:uniform-p3-C3}
 e(q-7)=2(q+1)(q+1-3a),
\end{equation}
where $a$ is the order of the homology group at each vertex and
$a^2\mid e$. Write $q+1=4m_0$, with $m_0$ odd. Reducing
\eqref{eq:uniform-p3-C3} modulo $q+1$ shows that $m_0\mid e$;
write $e=bm_0$. The upper bound on $|G|$ preceding
\eqref{eq:uniform-size} gives $b\le8$. Equation
\eqref{eq:uniform-p3-C3} reduces to
\[
 (8-b)m_0=6a-2b,\qquad a\le\sqrt{bm_0}\le\sqrt{8m_0}.
\]
For $m\ge7$ one has $m_0\ge547>288$, making the equation
impossible for $b\le7$; if $b=8$, it gives $a=8/3$.
For $m=5$, one has $m_0=61$ and $59b+6a=488$. Modulo $6$
this gives $b=4$, and then $a=42$, contrary to $a^2\mid e=244$.

For $p\ge5$, \cite[Propositions~3.5 and~3.6]{DVMZ} gives a
divisor $a$ of $q+1$ such that $a^2\mid e$, $e/a\mid q+1$, and
\[
 g(\mathcal H_q/G)
 =\frac{(q+1)(q-3a+1)+ce}{6e},
 \qquad c\in\{0,2,6\}.
\]
Equating this with $(q-p)/(4p)$ gives
\[
 e\bigl(3q-(3+2c)p\bigr)
 =2p(q+1)(q+1-3a).
\]
Write $e=a^2b$ and $q+1=abt$. Then
\begin{equation}\label{eq:uniform-C3-necessary}
 bt(3a-2pt)=(3+2c)p+3-6pt.
\end{equation}
In particular, $t\mid(3+2c)p+3$, and hence $t<18p$.
Put $\rho=((3+2c)p+3)/t$. Since $p>3$, one has
$\rho\ne6p$; equation \eqref{eq:uniform-C3-necessary} reads
$b(3a-2pt)=\rho-6p$. It follows that
$b<18p$ and $a<18p^2$. Thus
\[
 q+1=abt<5832p^4.
\]
\begin{samepage}
As $m$ is odd and $m\ge5$, this leaves only $m=5$ with
$5\le p<5832$, $m=7$ with $5\le p<19$, and $m=9$ with
$p=5$. Put $J=3+2c$. Eliminating $a$ from
\eqref{eq:uniform-C3-necessary} and $q+1=abt$ gives the necessary
conditions
\[
 t\mid Jp+3,\qquad
 2bt^2=3p^{m-1}-J+6t,\qquad
 bt\mid p^m+1.
\]
An elementary check of these finite divisibility conditions in
MAGMA gives no solutions for the remaining values of $p$, $m$ and
$c\in\{0,2,6\}$.
\end{samepage}

Assume now that the permutation image is $S_3$. If $p=3$,
\cite[Proposition~3.7]{DVMZ} gives $e=a^2$ for some
$a\mid q+1$. With $z=(q+1)/a$, the genus equation is
\[
 \begin{cases}
  q+1=(z-3)^2+3,&a\mid(q+1)/2,\\
  q+1=(z-3)^2,&a\nmid(q+1)/2.
 \end{cases}
\]
In the first case $z\mid12$ and $q+1\le84$; in the second,
$z\mid9$ and $q+1\le36$. Both contradict $q+1\ge244$.

Let $p\ge5$. By \cite[Proposition~3.7]{DVMZ}, one has
$e=\kappa a^2$ with $\kappa\in\{1,3\}$. Putting
$z=(q+1)/a$ in the quotient-genus formula gives
\begin{equation}\label{eq:uniform-S3}
 3\kappa p^{m-1}
 =z^2-3(1+\kappa)z+\kappa(15-\delta),
 \qquad z\mid q+1,
\end{equation}
where $\delta\in\{0,3,4,7,8,11\}$. Put
$P=p^{(m-1)/2}$. If $\kappa=3$, then
\[
 (z-6)^2=9P^2+3(\delta-3).
\]
Since $P\ge25$ and $-9\le3(\delta-3)\le24$, the only
possible square is $9P^2$. Hence $\delta=3$ and
$z=3(P+2)$. But $z\mid pP^2+1$ implies
$P+2\mid4p+1$, whereas $P+2\ge p^2+2>4p+1$.

If $\kappa=1$, multiplication of \eqref{eq:uniform-S3} by $p$
and reduction modulo $z$ give $z\mid p(15-\delta)+3$.
On the other hand,
\[
 (z-3)^2=3P^2+\delta-6.
\]
Thus $z\le15p+3$ is impossible for $p\ge11$ or $m\ge7$.
Only $(m,p)=(5,5)$ or $(5,7)$ remains. If $p=5$, then
$47\le z\le78$, but $5^5+1=6\cdot521$ has no divisor in
this interval. If $p=7$, then $88\le z\le108$; the only
divisor of $7^5+1=2^3\cdot11\cdot191$ in this interval
is $88$. This would give
$(z-3)^2-3p^4=22\notin\{-6,-3,-2,1,2,5\}$.

\medskip
\noindent\emph{Singer triangles.}
Put $L=q^2-q+1$. A Singer-triangle stabilizer has the form
$C_L\rtimes C_3$. If $G\le C_L$, then $G$ is characteristic in
$C_L$ and is normalized by the complement $C_3$, contrary to
\eqref{eq:uniform-self}. Hence
\[
 |G|=3e,
 \qquad e=|G\cap C_L|\mid L.
\]
No nonidentity element of $G$ is a homology or an elation: the
nonidentity elements of $C_L$ have three distinct eigenvalues, while
the remaining elements cyclically permute three noncollinear points
and Lemma~\ref{lem:uniform-three-points} applies.
If $p=3$, a generator of the complement acts on $C_L$ by
$x\mapsto x^{q-1}$. It has no nonidentity fixed point, since
$\gcd(q-2,L)=1$. The $e-1$ elements in $C_L\setminus\{1\}$
contribute $3$ each, and the other $2e$ elements are regular
unipotents and contribute $2$ each. Substituting their contributions
in \eqref{eq:uniform-RH} gives
\[
 e\frac{q-1}{2}=q(q-1)+1.
\]
Thus $q-1\mid2$, which is impossible.

Let $p\ge5$. By \cite[Proposition~4.2]{DM}, every nonidentity
element contributes at most $3$, so
$0\le\sum_{1\ne\sigma\in G}i(\sigma)\le9e-3$.
Riemann--Hurwitz gives
\[
 6eg(\X_{p,n})=L-3+6e-
 \sum_{1\ne\sigma\in G}i(\sigma).
\]
With $z=L/e$, it follows that
\[
 c:=3p^{m-1}-2z
 =3+\frac{2}{e}
 \left(6e-3-\sum_{1\ne\sigma\in G}i(\sigma)\right)
 \in\{-3,-2,\ldots,14\}.
\]
Using $L=p^{2m}-p^m+1$ and reducing modulo $z$ gives
\begin{equation}\label{eq:uniform-Singer-div}
 z\mid c^2p^2-3cp+9<196p^2.
\end{equation}
Since $2z=3p^{m-1}-c\ge3p^{m-1}-14$, this is
impossible for $m\ge7$ or $p\ge13$. Thus $m=5$ and
$p\in\{5,7,11\}$. As $p$ is odd, $c$ must be odd, so
$c\in\{-3,-1,1,3,5,7,9,11,13\}$. If $p=11$,
then $0<c^2p^2-3cp+9<z$ for each value of $c$.
If $p=7$, the lower inequality
$c^2p^2-3cp+9\ge z$ leaves only $c=9,11,13$.
Writing $c^2p^2-3cp+9=az$ gives respectively
$a\le1,1,2$. Reduction modulo $p$ would give
$ac\equiv-18\pmod p$, which is impossible in these cases.
If $p=5$, the same congruence leaves only
$(c,a)=(7,1),(11,2),(13,4)$. The corresponding values of
$c^2p^2-3cp+9-az$ are $195,1005,315$, respectively, all
nonzero.

\medskip
\noindent\emph{The remaining maximal subgroups.}
The upper bound in \eqref{eq:uniform-size} shows that $G$ is a proper
subgroup of $\Gamma$. After the analysis above, we can assume now that $G$ stabilizes no point or triangle; the remaining possibilities in the maximal-subgroup classification for $\Gamma$ are described in \cite{MZwithout}; see in particular \cite[Theorem 2.1]{MZwithout} for $G\leq {\rm PSU}(3,q)$ and \cite[Theorem 3.1]{MZwithout} for $G\leq \Gamma\ne{\rm PSU}(3,q)$.

Suppose first that $G$ is contained in a conic stabilizer
$\mathrm{PGL}(2,q)$. By Dickson's classification
\cite[Chapter XII, Paragraph 260]{Dickson} (see also \cite[Theorem A.8]{HKT}), the possibilities
are groups of bounded order, groups with a nontrivial normal
$p$-subgroup, cyclic or dihedral groups contained in the normalizer of
a cyclic group of order $q-1$ or $q+1$, and the groups $\mathrm{PSL}(2,p^f)$ or
$\mathrm{PGL}(2,p^f)$ with $f\mid m$.

The groups of bounded order have order at most $60$, contradicting
\eqref{eq:uniform-size}. A nontrivial normal $p$-subgroup fixes a
unique point of $\mathcal H_q$, and its normality forces $G$ to fix
the same point. The cyclic and dihedral cases are also impossible.
Indeed, the cyclic group of order $q-1$ or $q+1$ has a unique involution, which is central in its
normalizer and therefore normalizes $G$. By
\eqref{eq:uniform-self}, this involution lies in $G$. It is a homology,
and its center is a point outside $\mathcal H_q$ fixed by $G$, contrary
to \eqref{eq:uniform-noexternal}. If $G$ lies between
$\mathrm{PSL}(2,p^f)$ and $\mathrm{PGL}(2,p^f)$, put $t=p^f$.
For $m/f\ge5$, its order is less than $t^3\le q^{3/5}<6q/7$;
for $m/f=1$, its order exceeds $3pq$. If $m/f=3$, then
$q=t^3$, and every proper subgroup of $\mathrm{PGL}(2,t)$ has
order less than $q/2$. Thus $G=\mathrm{PGL}(2,t)$, with
$t\ge27$. The $t^2$ involutions are its only possible
homologies. Its nonidentity unipotent elements act as regular
unipotents in the three-dimensional conic representation, so there
are no elations. Every other nonidentity element contributes at most
$3$ to the different. Its degree is therefore at most
$t^2(q+1)+3(q-t-1)$. As $|G|=q-t$, Riemann--Hurwitz gives
\[
 g(\mathcal H_q/G)>q/3>
 \frac{p^{m-1}-1}{4}=g(\X_{p,n}),
\]
a contradiction.

The exceptional maximal subgroups have order at most $2520$.
For $p\ge5$ or $m\ge9$ this is below $6q/7$. When
$p=3$ and $m=5,7$, the only possible exceptional group is
$\mathrm{PSL}(2,7)$, of order $168$, which is also too small;
the other exceptional groups do not occur in these cases by
\cite[Theorem~2.1]{MZspectrum}.

Finally, suppose that $G$ is contained in a proper unitary subfield
group. Put $t=p^f$ and $m=sf$, with $s\ge3$ odd and $f\mid m$.
The orders of the groups $\mathrm{PSU}(3,t)$ and
$\mathrm{PGU}(3,t)$ lie between
\[
 \frac{t^3(t^3+1)(t^2-1)}{3}
 \quad\text{and}\quad
 t^3(t^3+1)(t^2-1)<t^8.
\]
If $s\ge9$, even the full group is too small. If
$s\in\{3,5,7\}$ and $f\ge3$, both full groups have order
greater than $3pq$. The classification at level $t$ then shows
that a proper subgroup large enough to contain $G$ must lie in
a point stabilizer or, when $s=3$, in a conic stabilizer.
Indeed, the stabilizers of the two kinds of triangles have order
at most $6(t+1)^2$ or $3(t^2-t+1)$; exceptional subgroups
have order at most $2520$; and a further proper unitary subfield
group has order less than $t^{8/3}<6q/7$. A conic stabilizer
has order $t(t^2-1)<t^3$, and for $s\ge5$ it is also too
small. The point stabilizers fix an $\F_{q^2}$-rational point
on or off $\mathcal H_q$, since $s$ is odd. These cases have
already been excluded, as have conic stabilizers.

It remains to take $f=1$. When $m=5$, both full groups are
larger than $3pq$; when $m\ge9$, they are smaller than $6q/7$.
Their proper subgroups are excluded by the same classification:
apart from $\mathrm{PSU}(3,p)$ of index $3$ in
$\mathrm{PGU}(3,p)$ when $3\mid p+1$, any subgroup large enough
to contain $G$ fixes a point or lies in a conic stabilizer.
Thus, for $m=7$, only the full groups remain. If $p\ge5$,
there are fewer than $p^5$ homologies and fewer than $p^4$
elations. Every remaining nonidentity element contributes at
most $3$, so the different has degree at most
\[
 (p^5+p^4)(p^7+2)+3p^8.
\]
As $|\mathrm{PGU}(3,p)|<p^8$, Riemann--Hurwitz gives
$g(\mathcal H_q/G)>p^6/3$, whereas
$g(\X_{p,3})=(p^6-1)/4$. This excludes both full groups.
For $p=3$, one has
$\mathrm{PGU}(3,3)=\mathrm{PSU}(3,3)$, of order $6048$.
By \cite[Proposition~4.9]{MZwithout}, its different has degree
\[
 56(q+2)+189(q+1)+1344+3024+1512+5184=547180.
\]
Hence
\[
 g(\mathcal H_{3^7}/\mathrm{PSU}(3,3))
 =1+\frac{(q+1)(q-2)-547180}{2\cdot6048}=351,
\]
while $g(\X_{3,3})=(3^6-1)/4=182$. This exhausts the
possibilities for $G$ and proves the theorem.
\end{proof}

\section*{Statements and Declarations}

\noindent\textbf{Declaration of generative AI use.}
During the preparation of this manuscript, the authors used ChatGPT
(OpenAI) to improve the exposition, check mathematical
 arguments, and assist in the development of proofs based on ideas provided by the authors.
All results and proofs were independently verified by the authors, who assume full responsibility
for the content of the manuscript.

\section*{Acknowledgements}
The research of Daniele Bartoli, Marco Timpanella, and Giovanni Zini
was partially supported by the INdAM--GNSAGA Project
``Gruppi di automorfismi di curve algebriche in caratteristica positiva''
(CUP E53C25002010001).
The research of Herivelto Borges was supported by CNPq, grant 310194/2022-9, and by FAPESP, grant 2024/00923-6.

\end{document}